\theoremstyle{plain}
\newtheorem{theorem}                {Theorem}      [section]
\newtheorem*{theorem*}                {Theorem \ref{thm:appl}}
\newtheorem{proposition}  [theorem]  {Proposition}
\newtheorem{corollary}    [theorem]  {Corollary}
\newtheorem{lemma}        [theorem]  {Lemma}
\theoremstyle{definition}
\newtheorem{remark}       [theorem]  {Remark}
\newtheorem{definition}   [theorem]  {Definition}
\DeclareMathOperator{\trace}{trace}
\DeclareMathOperator{\Div}{div}
 \DeclareMathOperator{\Hess}{Hess}
\DeclareMathOperator{\grad}{grad}
\DeclareMathOperator{\Riem}{Riem}
\numberwithin{equation}{section}
\newcommand\myDelta{\stackrel{\mathclap{\tiny\mbox{$0$}}}{\Delta}}
\newcommand\mygrad{\stackrel{\mathclap{\tiny\mbox{$0$}}}{\grad}}
\newcommand\myRiem{\stackrel{\mathclap{\tiny\mbox{$N$}}}{\Riem}}
\begin{document}

\title[On biconservative surfaces]
{On biconservative surfaces}

\author{Simona~Nistor}

\address{Faculty of Mathematics - Research Department\\ Al. I. Cuza University of Iasi\\
Bd. Carol I, 11 \\ 700506 Iasi, Romania} \email{nistor.simona@ymail.com}

\thanks{The author was supported by a grant of the Romanian National Authority for Scientific Research and Innovation, CNCS - UEFISCDI, project number PN-II-RU-TE-2014-4-0004.}

\subjclass[2010]{Primary 53C42; Secondary 53C40}

\keywords{Biconservative surfaces, biharmonic submanifolds, mean curvature vector field, Codazzi tensor fields}

\begin{abstract}
We study in a uniform manner the properties of biconservative surfaces in arbitrary Riemannian manifolds. Biconservative surfaces being characterized by the vanishing of the divergence of a symmetric tensor field $S_2$ of type $(1,1)$, their properties will follow from general properties of a symmetric tensor field of type $(1,1)$ with free divergence. We find the link between the biconservativity, the property of the shape operator $A_H$ to be a Codazzi tensor field, the holomorphicity of a generalized Hopf function and the quality of the surface to have constant mean curvature. Then we determine the Simons type formula for biconservative surfaces and use it to study their geometry.

\end{abstract}

\maketitle
\section{Introduction}

In the last decade the theory of \textit{biconservative submanifolds} proved to be a very interesting research topic (see, for example, \cite{CMOP14,FNO16,FOP15,F15,FT16,MOR16,N16,S15,UT16}). This theory arose from the theory of \textit{biharmonic submanifolds}, but the class of biconservative submanifolds is richer than the later one.

Let $\left(M^m,g\right)$ and $\left(N^n,h\right)$ be two Riemannian manifolds. A \textit{biharmonic map} is a critical point of the \textit{bienergy functional}
$$
E_2:C^{\infty}(M,N)\rightarrow\mathbb{R},\quad E_{2}(\varphi)=\frac{1}{2}\int_{M}|\tau(\varphi)|^{2}\ v_g,
$$
where $\tau(\varphi)$ is the tension field of a smooth map $\varphi:M\to N$, and it is characterized by the vanishing of the \textit{bitension field} $\tau_2(\varphi)$ (see \cite{J86}). If $\varphi:\left(M^m,g\right)\to \left(N^n,h\right)$ is a biharmonic map and a Riemannian immersion, then $M$ is called a \textit{biharmonic submanifold} of $N$.

According to D. Hilbert (see \cite{H24}), to a functional $E$ we can associate a symmetric tensor field $S$ of type $(1,1)$, called the \textit{stress-energy tensor}, which is conservative, i.e., $\Div S=0$, at the critical points of $E$. In the particular case of the bienergy functional $E_2$, G. Y. Jiang (see \cite{J87}) defined the stress-bienergy tensor $S_2$ by
\begin{align*}
\langle S_2(X),Y\rangle=&\frac{1}{2}|\tau(\varphi)|^2\langle X,Y\rangle+\langle d\varphi,\nabla\tau(\varphi)\rangle\langle X,Y\rangle\\&-\langle d\varphi(X),\nabla_Y\tau(\varphi)\rangle-\langle d\varphi(Y),\nabla_X\tau(\varphi)\rangle,
\end{align*}
and proved that
$$
\Div S_2=-\langle\tau_2(\varphi),d\varphi\rangle.
$$
Therefore, if $\varphi$ is biharmonic, then $\Div S_2=0$ (see \cite{J87,LMO08}).

One can see that if $\varphi:\left(M^m,g\right)\to\left(N^n,h\right)$ is a Riemannian immersion then $\Div S_2=0$ if and only if the tangent part of the bitension field vanishes. A submanifold $M$ is called \textit{biconservative} if $\Div S_2=0$.

The biconservative submanifolds were studied for the first time in 1995 by Th. Hasanis and Th. Vlachos (see \cite{HV95}).

Biconservative submanifolds have some nice properties. For  example, when $m\neq 4$, a pseudoumbilical biconservative submanifold $\varphi:\left(M^m,g\right)\to \left(N^n,h\right)$ has constant mean curvature, i.e., it is $CMC$.  We have focused on the $m=2$ case and, in this special situation, the biconservative surfaces proved to have more interesting properties. A remarkable fact is that under the hypothesis of biconservativity some known results in the theory of submanifolds can be extended to more general contexts. For example, the generalized Hopf function associated to a $CMC$ biconservative surface in a Riemannian manifold is holomorphic (compare with the classical results: the Hopf function associated to a $CMC$ surface in a $3$-dimensional space form is holomorphic, and the generalized Hopf function associated to a $PMC$ surface in an $n$-dimensional space form is holomorphic).

The main idea is to notice that biconservative surfaces are characterized by $\Div S_2=0$ and therefore their properties will follow from the features of a free divergence symmetric tensor field of type $(1,1)$ on $M^2$ with a specific $\trace$.

The paper in organized as follows. After recalling some general results about tensor fields and submanifolds, we present in \textit{Section 3} some characterizations of biconservative submanifolds which satisfy some additional geometric hypotheses (as $A_H$ being a Condazzi tensor field or the surface having the mean curvature vector field $H$ parallel in the normal bundle, i.e., being $PMC$). We also study the properties of submanifolds with $A_H$ parallel, as they are automatically biconservative.

One of the main results in \textit{Section 4} is Theorem $\ref{teorema2}$ which gives a link between the biconservativity of a surface and some properties of $H$ and $A_H$. In order to obtain this result, as a biconsevative surface is characterized by the vanishing of the divergence of $S_2$, we study first, a little bit more generally, the properties of a symmetric tensor field $T$ of type $(1,1)$ with $\Div T=0$, and then we apply these properties to $S_2$. In this section we also pay a special attention to $CMC$ biconservative surfaces in an arbitrary manifold $N^n$. More precisely, we give a description, only in terms of $|H|$ and $\mu$ (where $\mu$ is the difference between the principal curvatures of $A_H$), of the metric on the surface and of the shape operator $A_H$. We prove that a $CMC$ biconservative surface can be immersed in $3$-dimensional space forms having as the shape operator the tensor field $A_H$ or $S_2$.

In \textit{Section 5} we find the expression of the rough-Laplacian $\Delta^R S_2$ of $S_2$ and then, integrating, we derive the conditions under which a compact biconservative surface has $A_H$ parallel (Theorem $\ref{main-th0.}$ and Theorem $\ref{main-th.}$). With a different technique we get a similar result in the complete non-compact case (Theorem $\ref{main-th1.}$).

\textit{Conventions.} Throughout this paper all manifolds, metrics and maps are assumed to be smooth, i.e., in the $C^\infty$ category, and we will often indicate the various Riemannian metrics by the same symbol $\langle,\rangle$. All manifolds are assumed to be connected and oriented. The following sign conventions for the curvature tensor field and for the rough-Laplacian are used
$$
R^N(U,V)T=\nabla^N_U\nabla^N_V T-\nabla^N_V\nabla^N_U T-\nabla^N_{[U,V]}T,
$$
and
$$
\Delta^\varphi W=-\trace \nabla^2 W,
$$
where $W\in C\left(\varphi^{-1}(TN)\right)$ and $U,V,T\in C(TN)$.

The mean curvature tensor field of a submanifold $M^m$ in $N^n$ is defined by
$$
H=\frac{1}{m}\trace B,
$$
where $B\in C(\odot^2 T^*M\otimes NM)$ is the second fundamental form of $M$ in $N$ and the $\trace$ is considered with respect to the metric on $M$.
\section{Preliminaries}\label{preliminaries}

First we recall some notions, formulas and general results about tensor fields and submanifolds that we will use later.

It is well-known that a symmetric tensor field $T$ of type $(1,1)$ on a Riemannian manifold $\left(M^m,g\right)$ can be identified with a symmetric tensor field $\tilde{T}$ of type $(0,2)$
$$
\langle T(X),Y\rangle=\tilde{T}(X,Y), \qquad X,Y\in C(TM),
$$
and, we will use the same notation $T$ instead of $\tilde{T}$.

\begin{proposition}
Let $\left(M^m,g\right)$ be a Riemannian manifold and consider $T$ and $S$ two symmetric tensor fields of type $(1,1)$. Then
\begin{equation}
\label{ec-prod-scalar-delta}
\langle\Delta^R T,S\rangle = \langle \nabla T,\nabla S \rangle - \Div Z,
\end{equation}
with $\Delta^R T = -\trace \nabla^2 T$, $Z\in C(TM)$, $Z=\langle \nabla_{X_i} T,S\rangle X_i$, where $\left\{X_i\right\}_{i=\overline{1,m}}$ is an orthonormal local frame field.
\end{proposition}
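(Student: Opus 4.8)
The plan is to verify the identity pointwise, working at an arbitrary point $p\in M$ with respect to a \emph{geodesic frame} $\{X_i\}$, i.e. an orthonormal local frame satisfying $(\nabla_{X_i}X_j)(p)=0$ for all $i,j$. Such a frame exists around any point, and its advantage is that all connection coefficients vanish at $p$, so second covariant derivatives reduce there to iterated first derivatives: $(\nabla^2_{X_i,X_i}T)(p)=(\nabla_{X_i}\nabla_{X_i}T)(p)$, the omitted correction $\nabla_{\nabla_{X_i}X_i}T$ being zero at $p$.

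First I would compute the divergence of $Z=\langle\nabla_{X_i}T,S\rangle X_i$ directly from $\Div Z=\sum_j\langle\nabla_{X_j}Z,X_j\rangle$. Expanding $\nabla_{X_j}Z$ by the Leibniz rule produces two groups of terms: one in which the derivative falls on the scalar coefficients $\langle\nabla_{X_i}T,S\rangle$, and one in which it falls on the frame vectors through $\nabla_{X_j}X_i$. Evaluating at $p$, the second group vanishes by the geodesic-frame condition, and using $\langle X_i,X_j\rangle=\delta_{ij}$ the first collapses to $(\Div Z)(p)=\sum_i X_i\!\left(\langle\nabla_{X_i}T,S\rangle\right)(p)$.

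Next I would apply metric compatibility of $\nabla$ to split each summand, $X_i\langle\nabla_{X_i}T,S\rangle=\langle\nabla_{X_i}\nabla_{X_i}T,S\rangle+\langle\nabla_{X_i}T,\nabla_{X_i}S\rangle$. Summing over $i$ at $p$ and invoking the geodesic-frame simplification above, the first sum equals $\langle\trace\nabla^2 T,S\rangle=-\langle\Delta^R T,S\rangle$, while the second sum is by definition the pointwise inner product $\langle\nabla T,\nabla S\rangle$ of the covariant derivatives on the tensor bundle. Rearranging yields $\langle\Delta^R T,S\rangle=\langle\nabla T,\nabla S\rangle-\Div Z$ at $p$, and since $p$ was arbitrary the identity holds on all of $M$.

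The computation is essentially routine; the only step requiring care is the bookkeeping of the second covariant derivative. The subtlety is that an expression such as $\sum_i\langle\nabla_{X_i}\nabla_{X_i}T,S\rangle$ is \emph{not} invariant on its own, yet its value at $p$ agrees with the tensorial quantity $-\langle\Delta^R T,S\rangle$ precisely because the geodesic frame kills the correction $\nabla_{\nabla_{X_i}X_i}T$; one should therefore keep every evaluation strictly at $p$ until the final, manifestly invariant identity is assembled. A frame-free alternative is to note that $Z$ is the metric dual of the $1$-form $X\mapsto\langle\nabla_X T,S\rangle$ and to expand $\Div Z$ through the same Leibniz and compatibility steps written invariantly, avoiding the choice of frame at the cost of heavier notation.
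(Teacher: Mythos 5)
Your proof is correct: the geodesic-frame computation, the Leibniz expansion of $\Div Z$, the metric compatibility on the tensor bundle, and the identification of $\sum_i\nabla_{X_i}\nabla_{X_i}T$ with $\trace\nabla^2 T$ at the center of the frame are all handled properly, including the sign coming from $\Delta^R T=-\trace\nabla^2 T$. The paper states this proposition without proof (it is a standard Weitzenb\"ock-type integration-by-parts identity used later for the Simons formula), and your argument is exactly the standard one that fills in this omission.
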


\begin{proposition}
Let $\left(M^m,g\right)$ be a Riemannian manifold and consider $T$ a symmetric tensor field of type $(1,1)$ and $\alpha$ is a smooth function on $M$. Then
\begin{equation}\label{diver}
\Div \left(T\left(\grad \alpha\right)\right) = \langle \Div T,\grad \alpha \rangle + \langle T, \Hess \alpha\rangle,
\end{equation}
\end{proposition}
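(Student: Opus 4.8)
The plan is to compute the divergence of the vector field $V=T(\grad\alpha)$ directly from its definition, expanding the covariant derivative of $T$ by the Leibniz rule and then exploiting the symmetry of $T$ twice.

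First I would fix an orthonormal local frame $\left\{X_i\right\}_{i=\overline{1,m}}$ and recall that for any vector field $V$ one has $\Div V=\sum_i\langle\nabla_{X_i}V,X_i\rangle$. Applying this to $V=T(\grad\alpha)$ and using the product rule $\nabla_{X_i}(T(\grad\alpha))=(\nabla_{X_i}T)(\grad\alpha)+T(\nabla_{X_i}\grad\alpha)$, the divergence splits into two sums:
\begin{equation*}
\Div\left(T(\grad\alpha)\right)=\sum_i\langle(\nabla_{X_i}T)(\grad\alpha),X_i\rangle+\sum_i\langle T(\nabla_{X_i}\grad\alpha),X_i\rangle.
\end{equation*}

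Next I would treat each sum using that $T$ is symmetric, which forces each $\nabla_{X_i}T$ to be symmetric as well. For the first sum, symmetry gives $\langle(\nabla_{X_i}T)(\grad\alpha),X_i\rangle=\langle(\nabla_{X_i}T)(X_i),\grad\alpha\rangle$, so summing over $i$ and recalling that $\Div T=\sum_i(\nabla_{X_i}T)(X_i)$ yields precisely $\langle\Div T,\grad\alpha\rangle$. For the second sum, symmetry of $T$ gives $\langle T(\nabla_{X_i}\grad\alpha),X_i\rangle=\langle\nabla_{X_i}\grad\alpha,T(X_i)\rangle=\Hess\alpha(X_i,T(X_i))$; expanding $T(X_i)$ in the frame and comparing with the definition of the inner product of $(0,2)$-tensors identifies $\sum_i\Hess\alpha(X_i,T(X_i))$ with $\langle T,\Hess\alpha\rangle$. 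Adding the two contributions gives the stated identity.

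The computation is essentially routine; the only points requiring care are the conventions—namely that $\nabla_{X_i}T$ inherits the symmetry of $T$, that the divergence of a $(1,1)$-tensor is $\Div T=\sum_i(\nabla_{X_i}T)(X_i)$, and that the metric pairing of $(0,2)$-tensors is $\langle T,\Hess\alpha\rangle=\sum_{i,j}\langle T(X_i),X_j\rangle\,\Hess\alpha(X_i,X_j)$. Matching these definitions consistently is the main (and only genuine) obstacle.
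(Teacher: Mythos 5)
Your proof is correct: the Leibniz expansion of $\nabla_{X_i}\left(T\left(\grad\alpha\right)\right)$, the symmetry of $\nabla_{X_i}T$ inherited from $T$ (via metric compatibility of the Levi--Civita connection), and the identification $\sum_i\Hess\alpha\left(X_i,T\left(X_i\right)\right)=\langle T,\Hess\alpha\rangle$ are all used correctly and with conventions matching the paper's. The paper states this proposition without proof, as a standard preliminary fact, and your direct computation is exactly the standard argument it implicitly relies on.
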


\begin{definition}
A submanifold $\varphi:M^m\to N^n$ is called \textit{pseudoumbilical} if $A_H=|H|^2 I$, where $I$ is the identity tensor field of type $(1,1)$.
\end{definition}

Using the Codazzi equation, we easily find the next result.

\begin{proposition}
Let $\varphi:M^m\to N^n$ be a submanifold. Then
\begin{equation}
\label{subvarietateoarecare1}
\trace \nabla A_H = \frac{m}{2}\grad \left( |H|^2 \right) + \trace A_{\nabla_{\cdot}^\perp H}(\cdot)+ \trace\left(R^N(\cdot,H)\cdot\right)^T.
\end{equation}
\end{proposition}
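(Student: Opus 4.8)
The plan is to relate the covariant derivative of the $(1,1)$-tensor $A_H$ to that of the second fundamental form $B$ and then to trace out using the Codazzi equation. Throughout, $\left\{X_i\right\}_{i=\overline{1,m}}$ denotes an orthonormal local frame field, $\nabla^\perp$ the normal connection, and $\overline\nabla$ the induced connection on $B$, i.e.\ $(\overline\nabla_X B)(Y,Z)=\nabla^\perp_X\bigl(B(Y,Z)\bigr)-B(\nabla_X Y,Z)-B(Y,\nabla_X Z)$.

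First I would differentiate the defining identity $\langle A_H Y,Z\rangle=\langle B(Y,Z),H\rangle$. Expanding $X\langle A_H Y,Z\rangle$ with the Levi-Civita connection of $M$ and $X\langle B(Y,Z),H\rangle$ with the metric-compatible normal connection, and then inserting the definition of $\overline\nabla B$, a direct computation yields
\[
\langle(\nabla_X A_H)Y,Z\rangle=\langle(\overline\nabla_X B)(Y,Z),H\rangle+\langle B(Y,Z),\nabla^\perp_X H\rangle,
\]
where the last term equals $\langle A_{\nabla^\perp_X H}Y,Z\rangle$ by the definition of the shape operator. Setting $X=Y=X_i$ and summing, the second summand contributes exactly $\trace A_{\nabla^\perp_\cdot H}(\cdot)$, so the whole problem reduces to evaluating $\sum_i\langle(\overline\nabla_{X_i}B)(X_i,Z),H\rangle$.

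To handle this term I would use the symmetry of $B$ together with the Codazzi equation, which in the paper's curvature convention reads $(\overline\nabla_X B)(Y,Z)-(\overline\nabla_Y B)(X,Z)=\bigl(R^N(X,Y)Z\bigr)^\perp$, to move the differentiation direction into the trace slot:
\[
(\overline\nabla_{X_i}B)(X_i,Z)=(\overline\nabla_Z B)(X_i,X_i)+\bigl(R^N(X_i,Z)X_i\bigr)^\perp.
\]
Summing over $i$ and using that contraction commutes with $\overline\nabla$ turns $\sum_i(\overline\nabla_Z B)(X_i,X_i)$ into $\nabla^\perp_Z(\trace B)=m\,\nabla^\perp_Z H$; pairing with $H$ and using $\langle\nabla^\perp_Z H,H\rangle=\tfrac12 Z|H|^2=\tfrac12\langle\grad(|H|^2),Z\rangle$ produces the term $\tfrac{m}{2}\grad(|H|^2)$. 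Finally, the pair symmetry $\langle R^N(X_i,Z)X_i,H\rangle=\langle R^N(X_i,H)X_i,Z\rangle$ identifies the remaining curvature contribution with $\langle\trace(R^N(\cdot,H)\cdot)^T,Z\rangle$. Since $Z$ is arbitrary, assembling the three pieces gives \eqref{subvarietateoarecare1}.

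I expect no conceptual difficulty here; the computation is routine once the relation between $\nabla A_H$ and $\overline\nabla B$ is established. The only delicate points are bookkeeping the sign of the Codazzi equation against the sign convention fixed for $R^N$, and applying the pairing symmetry of the curvature tensor correctly to interchange $Z$ and $H$ before projecting tangentially.
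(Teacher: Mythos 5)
Your proof is correct and takes essentially the same approach the paper intends: the paper offers no written proof beyond the remark that the result follows ``using the Codazzi equation,'' and your computation --- relating $\nabla A_H$ to the derivative of $B$, applying the Codazzi equation to swap the differentiation slot into the trace, and using the pair symmetry of $R^N$ --- supplies exactly those details, with the signs consistent with the paper's curvature convention.
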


\begin{corollary}
Let $\varphi:M^m\to N^n(c)$ be a submanifold, $c\in\mathbb{R}$. Then
\begin{equation}
\label{subvarietateoarecare2}
\trace \nabla A_H = \frac{m}{2}\grad \left( |H|^2 \right) + \trace A_{\nabla_{\cdot}^\perp H}(\cdot).
\end{equation}
\end{corollary}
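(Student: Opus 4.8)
The plan is to obtain this Corollary directly from the preceding Proposition by specializing the ambient curvature to that of a space form. The only term in \eqref{subvarietateoarecare1} that carries ambient curvature information beyond the second fundamental form is $\trace\left(R^N(\cdot,H)\cdot\right)^T$, so the whole task reduces to showing that this term vanishes when $N^n(c)$ has constant sectional curvature $c$.

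First I would recall that on a space form $N^n(c)$ the Riemann curvature tensor has the explicit form
$$
R^N(X,Y)Z = c\left(\langle Y,Z\rangle X - \langle X,Z\rangle Y\right),\qquad X,Y,Z\in C(TN).
$$
Fixing an orthonormal local frame $\left\{X_i\right\}_{i=\overline{1,m}}$ tangent to $M$, I would substitute $X=X_i$, $Y=H$, $Z=X_i$ and sum over $i$ to get
$$
\trace R^N(\cdot,H)\cdot = \sum_i R^N(X_i,H)X_i = c\sum_i\left(\langle H,X_i\rangle X_i - \langle X_i,X_i\rangle H\right).
$$

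The key observation is that $H$ is a section of the normal bundle, so $\langle H,X_i\rangle=0$ for each $i$, while $\langle X_i,X_i\rangle=1$; the sum therefore collapses to $-mcH$. Since $H$ is normal, its tangential component is zero, whence $\trace\left(R^N(\cdot,H)\cdot\right)^T=\left(-mcH\right)^T=0$. Substituting this into \eqref{subvarietateoarecare1} immediately yields \eqref{subvarietateoarecare2}. There is essentially no obstacle here: the entire argument is the insertion of the constant-curvature expression for $R^N$ together with the normality of $H$. The only point requiring a moment's care is to take the tangential part at the very end, rather than discarding the curvature term prematurely, since the untraced expression $-mcH$ is itself nonzero and only its projection onto $TM$ vanishes.
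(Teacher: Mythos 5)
Your proposal is correct and matches the paper's (implicit) argument: the paper states this corollary as an immediate specialization of \eqref{subvarietateoarecare1}, the point being exactly that in $N^n(c)$ one has $R^N(X,Y)Z=c\left(\langle Y,Z\rangle X-\langle X,Z\rangle Y\right)$, so $R^N(X_i,H)X_i=-cH$ is purely normal and the term $\trace\left(R^N(\cdot,H)\cdot\right)^T$ vanishes. Your care in projecting onto $TM$ at the end, rather than claiming the untraced curvature term itself is zero, is precisely the right touch.
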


Let $\varphi:M^m\to N^n$ be a submanifold. Computing $\tau_2(\varphi)$ by splitting it in the tangent and in the normal part and using \eqref{subvarietateoarecare1} we get the following characterizations for biconservative submanifolds (various expressions for $\tau_2(\varphi)$ were obtained in \cite{C84,LMO08,O02,O10}).

\begin{proposition}
\label{corolarbicons}
Let $\varphi:M^m\to N^n$ be a submanifold. Then the following conditions are equivalent:
\begin{enumerate}
    \item $M$ is biconservative;
    \item $\trace A_{\nabla^\perp_{\cdot} H}(\cdot)+\trace \nabla A_H +\trace \left( R^N(\cdot,H)\cdot\right)^T=0$;
    \item $\frac{m}{2}\grad\left(|H|^2\right)+2\trace A_{\nabla^\perp_{\cdot} H}(\cdot) + 2\trace \left( R^N(\cdot,H)\cdot\right)^T=0$;
    \item $2\trace \nabla A_H-\frac{m}{2}\grad\left(|H|^2\right)=0$.
\end{enumerate}
\end{proposition}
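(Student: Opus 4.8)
The plan is to trace everything back to the tangential part of the bitension field and then to rearrange the three remaining expressions using the identity \eqref{subvarietateoarecare1}.

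First I would record the equivalence already implicit in the definition of biconservativity. Since $\Div S_2=-\langle\tau_2(\varphi),d\varphi\rangle$ and, for a Riemannian immersion, every $d\varphi(X)$ is tangent to $M$, the one-form $\langle\tau_2(\varphi),d\varphi\rangle$ is dual to the tangential component of $\tau_2(\varphi)$. Hence $\Div S_2=0$ holds precisely when the tangent part of $\tau_2(\varphi)$ vanishes, which turns condition (1) into a statement about $\tau_2(\varphi)^{T}$.

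Next I would compute that tangential part. Starting from $\tau(\varphi)=mH$ and $\tau_2(\varphi)=-m\Delta^\varphi H-m\trace R^N(d\varphi(\cdot),H)d\varphi(\cdot)$, I would expand $\Delta^\varphi H=-\trace\nabla^\varphi\nabla^\varphi H$ in a geodesic orthonormal frame $\{X_i\}$ and split each term into its tangent and normal parts by means of the Weingarten and Gauss formulas $\nabla^\varphi_X H=-A_H X+\nabla^\perp_X H$ and $\nabla^\varphi_X(A_H Y)=\nabla_X(A_H Y)+B(X,A_H Y)$. The $-A_H X_i$ piece assembles, after incorporating the connection terms $\nabla^\varphi_{\nabla_{X_i}X_i}H$, into $-\trace\nabla A_H$, the Weingarten term coming from $\nabla^\perp_{X_i}H$ contributes $-\trace A_{\nabla^\perp_\cdot H}(\cdot)$, and the curvature summand contributes $-\trace(R^N(\cdot,H)\cdot)^{T}$. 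This yields
$$\tau_2(\varphi)^{T}=-m\left(\trace\nabla A_H+\trace A_{\nabla^\perp_\cdot H}(\cdot)+\trace(R^N(\cdot,H)\cdot)^{T}\right),$$
so that $\tau_2(\varphi)^{T}=0$ is exactly condition (2); combined with the previous step this gives the equivalence of (1) and (2).

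Finally, the equivalence of (2), (3) and (4) is purely algebraic once \eqref{subvarietateoarecare1} is invoked. That identity says $\trace\nabla A_H-\frac{m}{2}\grad(|H|^2)=\trace A_{\nabla^\perp_\cdot H}(\cdot)+\trace(R^N(\cdot,H)\cdot)^{T}$; substituting the right-hand side into condition (2) collapses it to $2\trace\nabla A_H-\frac{m}{2}\grad(|H|^2)=0$, which is (4), and substituting it into condition (3) produces the same expression, so all three coincide. The main obstacle is the bookkeeping in the middle step: correctly separating the tangent and normal parts of $\trace\nabla^\varphi\nabla^\varphi H$ and tracking the connection terms so the tangential contribution assembles into the intrinsic tensor $\trace\nabla A_H$ rather than a frame-dependent sum, together with matching the sign conventions for $\Delta^\varphi$ and $R^N$; once this is done, the equivalences among (2), (3) and (4) are immediate.
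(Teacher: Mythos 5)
Your proposal is correct and follows essentially the same route as the paper: the paper's (one-line) proof is precisely ``computing $\tau_2(\varphi)$ by splitting it in the tangent and the normal part and using \eqref{subvarietateoarecare1}'', together with the already-stated fact that $\Div S_2=0$ for an immersion is equivalent to the vanishing of $\left(\tau_2(\varphi)\right)^T$. Your tangential-part computation $\left(\tau_2(\varphi)\right)^T=-m\left(\trace\nabla A_H+\trace A_{\nabla^\perp_\cdot H}(\cdot)+\trace\left(R^N(\cdot,H)\cdot\right)^T\right)$ and the algebraic reduction of (2), (3), (4) to one another via \eqref{subvarietateoarecare1} are exactly the details the paper leaves implicit.
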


We end this section with the following result.

\begin{proposition}
Let $\varphi:M^m\to N^n$ be a submanifold. Then we have:

\begin{enumerate}
  \item the stress-bienergy tensor of $\varphi$ is determined by
   \begin{equation}
   \label{S_2}
    S_2= - \frac{m^2}{2}|H|^2 I + 2m A_H;
   \end{equation}
  \item $\trace S_2 = m^2|H|^2\left(2-\frac{m}{2}\right)$;
  \item the relation between the divergence of $S_2$ and the divergence of $A_H$ is given by
  \begin{equation}\label{divergente0}
  \Div S_2=-\frac{m^2}{2}\grad \left(|H|^2\right)+2m\Div A_H;
  \end{equation}
  \item $\left|S_2\right|^2=m^4|H|^4\left(\frac{m}{4}-2\right)+4m^2\left|A_H\right|^2$.
\end{enumerate}

\end{proposition}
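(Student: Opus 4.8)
The plan is to derive part (1) directly from Jiang's expression for $S_2$ recalled in the introduction, specialized to the case of a Riemannian immersion, and then to obtain parts (2)--(4) as purely algebraic consequences of (1).

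First I would record the two facts that turn Jiang's formula into something explicit. For a Riemannian immersion $\varphi:M^m\to N^n$ one has $\tau(\varphi)=\trace B=mH$, so that $|\tau(\varphi)|^2=m^2|H|^2$; and the Weingarten formula gives, for the pullback connection, $\nabla_X\tau(\varphi)=m\nabla_X H=-mA_HX+m\nabla^\perp_X H$, where the first term is tangent and the second is normal. I would also note the standard trace identity $\trace A_H=\langle\trace B,H\rangle=m|H|^2$.

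Then, substituting these into $\langle S_2(X),Y\rangle$ and fixing an orthonormal frame $\{X_i\}$, I would evaluate the four summands one at a time. The first term is $\frac{1}{2}m^2|H|^2\langle X,Y\rangle$. For the second, $\langle d\varphi,\nabla\tau(\varphi)\rangle=\sum_i\langle X_i,-mA_HX_i+m\nabla^\perp_{X_i}H\rangle=-m\trace A_H=-m^2|H|^2$, since the normal part is orthogonal to each $X_i$; this contributes $-m^2|H|^2\langle X,Y\rangle$. In the third and fourth terms only the tangent part $-mA_H$ survives the pairing with $d\varphi(X)=X$ and $d\varphi(Y)=Y$, each yielding $m\langle A_HX,Y\rangle$ after using the symmetry of $A_H$. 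Collecting the four contributions gives $\langle S_2(X),Y\rangle=-\frac{m^2}{2}|H|^2\langle X,Y\rangle+2m\langle A_HX,Y\rangle$, which is exactly \eqref{S_2}.

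With (1) in hand the remaining items are immediate. For (2) I would use $\trace I=m$ together with $\trace A_H=m|H|^2$. For (3) I would invoke the elementary rules $\Div(\alpha I)=\grad\alpha$ for a smooth function $\alpha$ and the linearity of $\Div$, taking $\alpha=-\frac{m^2}{2}|H|^2$. For (4) I would expand $|S_2|^2=\langle aI+bA_H,\,aI+bA_H\rangle$ with $a=-\frac{m^2}{2}|H|^2$ and $b=2m$, using $\langle I,I\rangle=m$, $\langle I,A_H\rangle=\trace A_H=m|H|^2$ and $\langle A_H,A_H\rangle=|A_H|^2$. I do not expect any genuine obstacle here, since the whole proposition is a bookkeeping computation; the only places that require care are the consistent splitting of $\nabla_X\tau(\varphi)$ into its tangent and normal parts in part (1), and the correct evaluation of the Hilbert--Schmidt inner products in parts (2) and (4).
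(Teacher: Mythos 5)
Your proposal is correct: substituting $\tau(\varphi)=mH$, the Weingarten splitting $\nabla_X\tau(\varphi)=-mA_HX+m\nabla^\perp_XH$, and $\trace A_H=m|H|^2$ into Jiang's formula yields \eqref{S_2}, and parts (2)--(4) then follow by the algebra you describe. The paper states this proposition without proof, treating it as routine, and your computation is exactly the standard derivation the paper implicitly relies on, so there is nothing to add.
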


\begin{remark}
From equation $\eqref{divergente0}$, we see that if $M$ is biconservative it does not follow that $\Div A_H$ automatically vanishes. In fact, only when $|H|$ is constant the biconservativity is equivalent to $\Div A_H=0$.

\end{remark}

\section{Other characterizations of biconservative submanifolds}

In this section we will characterize the biconservative submanifolds which satisfy some additional geometric hypotheses.

We begin with a study on the basic properties of submanifolds with $A_H$ parallel, as they are the ``simplest'' biconservative surfaces. First, we define the \textit{principal curvatures} of a submanifold $M^m$ of $N^n$ as being the eigenvalue functions of $A_H$.

\begin{proposition}
\label{propValProprii}
Let $\varphi:M^m\to N^n$ be a submanifold and $\lambda_1\geq \cdots \geq \lambda_m$ the principal curvatures of $M$. If $\nabla A_H=0$, then:
\begin{enumerate}
  \item $M$ is biconservative;
  \item $\lambda_i$ are constant functions on $M$, in particular $M$ is $CMC$;
  \item $A_{\nabla^\perp_X H}(Y)-A_{\nabla^\perp_Y H}(X)=\left(R^N(X,Y)H\right)^T$, for any $X,Y\in C(TM)$;
  \item $\trace A_{\nabla^\perp_\cdot H}(\cdot)=-\trace\left(R^N(\cdot,H)\cdot\right)^T$.
\end{enumerate}
\end{proposition}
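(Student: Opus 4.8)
The plan is to treat $\nabla A_H=0$ as saying that $A_H$ is a \emph{parallel} symmetric operator, and to feed this hypothesis into the structural identities already recorded in Section~\ref{preliminaries}, most notably the trace formula \eqref{subvarietateoarecare1}. Throughout I would use the elementary fact $\trace A_H = m|H|^2$, which follows immediately from $\langle A_H X,Y\rangle=\langle B(X,Y),H\rangle$ together with $\trace B = mH$.

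I would begin with item (2), since items (1) and (4) depend on it. Because $\nabla A_H=0$, the Leibniz rule gives $\nabla(A_H^{\,k})=0$ for every $k$, so each power sum $\trace(A_H^{\,k})=\sum_i\lambda_i^k$ is constant on $M$. The elementary symmetric functions of the $\lambda_i$ are polynomials in these power sums, hence the coefficients of the characteristic polynomial of $A_H$ are constant; its multiset of roots is then locally constant, and since $M$ is connected and the ordered eigenvalues depend continuously on the point, each $\lambda_i$ is a constant function. In particular $m|H|^2=\trace A_H=\sum_i\lambda_i$ is constant, so $M$ is $CMC$ and $\grad(|H|^2)=0$. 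Item (1) follows at once: by Proposition~\ref{corolarbicons} it suffices to verify condition~(4) there, and indeed $2\trace\nabla A_H-\frac{m}{2}\grad(|H|^2)=0$ since both terms vanish.

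The substantive step is item (3), which is the \emph{untraced} Codazzi identity for $A_H$. Here I would pair the Codazzi equation $(R^N(X,Y)Z)^\perp=(\widetilde\nabla_X B)(Y,Z)-(\widetilde\nabla_Y B)(X,Z)$ with $H$, taking care that $H$ is not parallel in the normal bundle: differentiating $\langle A_H Y,Z\rangle=\langle B(Y,Z),H\rangle$ produces, besides the $\widetilde\nabla B$ term, an extra contribution $\langle A_{\nabla^\perp_X H}Y,Z\rangle$. Collecting terms and using the skew-symmetry $\langle R^N(X,Y)Z,H\rangle=-\langle\left(R^N(X,Y)H\right)^T,Z\rangle$ yields
$$
(\nabla_X A_H)Y-(\nabla_Y A_H)X = A_{\nabla^\perp_X H}Y-A_{\nabla^\perp_Y H}X-\left(R^N(X,Y)H\right)^T.
$$
Setting the left-hand side to zero, which is the hypothesis $\nabla A_H=0$, gives exactly the claimed relation. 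The main obstacle is precisely the bookkeeping of signs and of the normal-derivative term $A_{\nabla^\perp_X H}$ in this computation; everything else is formal.

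Finally, item (4) is the specialization of \eqref{subvarietateoarecare1} — which is itself the trace of the identity underlying item (3) — to the present situation. Substituting $\trace\nabla A_H=0$ and $\grad(|H|^2)=0$ (from item (2)) into \eqref{subvarietateoarecare1} leaves $\trace A_{\nabla^\perp_\cdot H}(\cdot)+\trace\left(R^N(\cdot,H)\cdot\right)^T=0$, which is the assertion.
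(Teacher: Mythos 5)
Your proof is correct, and on the one item the paper actually proves --- item (2) --- you take a genuinely different route. The paper's argument is geometric: it fixes an eigenbasis $\{e_i\}$ of $A_H(p)$ at a point $p$, parallel transports each $e_i$ along an arbitrary curve to another point $q$, and uses $\nabla A_H=0$ to conclude that the transported frame is still an eigenframe with the same eigenvalues, so the spectrum of $A_H(q)$ equals that of $A_H(p)$. Your argument is algebraic: $\nabla A_H=0$ forces $\nabla\bigl(A_H^{\,k}\bigr)=0$, hence each power sum $\trace\bigl(A_H^{\,k}\bigr)$ is (by connectedness) constant, hence the characteristic polynomial of $A_H$ is the same at every point and the ordered roots $\lambda_1\geq\cdots\geq\lambda_m$ are constant. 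Both are complete; the paper's transport argument has the mild virtue of exhibiting parallel eigenframes (useful later, e.g.\ in Proposition \ref{flat-pseudo}), while yours avoids any construction along curves and isolates exactly what is used, namely that traces of parallel tensors are constant. For the remaining items your derivations coincide with what the paper intends but omits: item (1) via condition (4) of Proposition \ref{corolarbicons} with $\trace\nabla A_H=0$ and $\grad\bigl(|H|^2\bigr)=0$; item (3) from the untraced Codazzi identity
\[
\bigl(\nabla_X A_H\bigr)Y-\bigl(\nabla_Y A_H\bigr)X = A_{\nabla^\perp_X H}Y-A_{\nabla^\perp_Y H}X-\bigl(R^N(X,Y)H\bigr)^T,
\]
whose sign bookkeeping you carry out correctly under the paper's curvature convention; and item (4) by substituting $\trace\nabla A_H=0$ and $\grad\bigl(|H|^2\bigr)=0$ into \eqref{subvarietateoarecare1}. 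One small remark: once the power sums are globally constant, the multiset of roots is literally the same at every point, so the appeal to continuity of the ordered eigenvalues is not even needed --- connectedness (to pass from locally constant to constant traces) already suffices.
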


\begin{proof}
For the sake of completeness, we give only the proof of the second item. More precisely, we show that $\lambda_i$ are constant functions on $M$. Let us consider an arbitrary point $p\in M$. Since $A_H(p)$ is symmetric, then $A_H(p)$ is diagonalizable. We denote by $\lambda_{1,p}\geq\cdots\geq\lambda_{m,p}$ the eigenvalues of $A_H(p)$ and then we define the continuous functions $\lambda_i:M\to \mathbb{R}$, $\lambda_i(p)=\lambda_{i,p}$, for any $p\in M$ and any $i=\overline{1,m}$.

Further, we consider $\left\{e_i\right\}_{i=\overline{1,m}}$ an orthonormal basis in $T_p M$ which diagonalize $A_H(p)$, i.e., $\left(A_H(p)\right)\left(e_i\right)=\lambda_i(p)e_i$, for any $i=\overline{1,m}$.

Consider $q\in M$, $q\neq p$, and $\gamma:[a,b]\to M$ a smooth curve such that $\gamma(a)=p$ and $\gamma(b)=q$. We define the vector fields $E_i=E_i(t)$, along $\gamma$, such that $DE_i(t)/dt=0$, for any $t$ and $E_i(a)=e_i$. It is easy to see that $W(t)=\left(A_H(\gamma(t))\right)\left(E_i(t)\right)$ is also a vector field along $\gamma$ and
\begin{align*}
 \frac{DW}{dt}(t)= & \left(\nabla_{\gamma'(t)}A_H\right)(E_i(t))+A_H\left(\frac{D E_i}{dt}(t)\right)\\
  = & \ 0.
\end{align*}
Now, since $D\left(\lambda_i(p)E_i\right)(t)/dt=0$, we get that $W(t)$ and $\lambda_i(p)E_i$ are parallel vector fields along $\gamma$. Since for $t=a$ they are equal, it follows that they coincide for any $t$, and in particular, for $t=b$. Therefore, $\lambda_i(p)$, $i=\overline{1,m}$, are eigenvalues of $A_H(q)$. As $q$ was chosen in an arbitrary way, we get that $\lambda_i$ are constant functions on $M$, for any $i=\overline{1,m}$.
\end{proof}

Later in this paper, we will find some converse results of this proposition, more precisely in the case when $m=2$, we will show that, under some ``standard'' hypotheses, a biconservative surface has $A_H$ parallel.

\begin{corollary}
Let $\varphi:M^m\to N^n(c)$ be a submanifold, $c\in\mathbb{R}$. If $\nabla A_H=0$, then
\begin{enumerate}
  \item $A_{\nabla^\perp_X H}(Y)=A_{\nabla^\perp_Y H}(X)$, for any $X,Y\in C(TM)$;
  \item $\trace A_{\nabla^\perp_\cdot H}(\cdot)=0$.
\end{enumerate}
\end{corollary}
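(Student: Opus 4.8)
The plan is to specialize items (3) and (4) of Proposition \ref{propValProprii} to the ambient space form $N^n(c)$, whose curvature tensor has the explicit form
$$
R^N(X,Y)Z=c\left(\langle Y,Z\rangle X-\langle X,Z\rangle Y\right),\qquad X,Y,Z\in C(TN).
$$
The whole argument then reduces to evaluating the two curvature terms appearing in that proposition and exploiting the fact that $X,Y\in C(TM)$ are tangent to $M$ while $H$ is normal.

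For the first item, I would start from item (3) of Proposition \ref{propValProprii}, namely $A_{\nabla^\perp_X H}(Y)-A_{\nabla^\perp_Y H}(X)=(R^N(X,Y)H)^T$, and compute its right-hand side. Substituting the space-form curvature gives $R^N(X,Y)H=c(\langle Y,H\rangle X-\langle X,H\rangle Y)$; since $X,Y$ are tangent and $H$ is normal, both $\langle Y,H\rangle$ and $\langle X,H\rangle$ vanish, so $(R^N(X,Y)H)^T=0$, and therefore $A_{\nabla^\perp_X H}(Y)=A_{\nabla^\perp_Y H}(X)$ for all $X,Y\in C(TM)$.

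For the second item, I would start from item (4), that is $\trace A_{\nabla^\perp_\cdot H}(\cdot)=-\trace(R^N(\cdot,H)\cdot)^T$, and evaluate the trace in an orthonormal local frame field $\{X_i\}$ on $M$. Using the space-form curvature together with $\langle H,X_i\rangle=0$ and $\langle X_i,X_i\rangle=1$, each summand becomes $R^N(X_i,H)X_i=c(\langle H,X_i\rangle X_i-\langle X_i,X_i\rangle H)=-cH$, whose tangent part vanishes because $H$ is normal. Summing over $i$ then gives $\trace(R^N(\cdot,H)\cdot)^T=0$, and hence $\trace A_{\nabla^\perp_\cdot H}(\cdot)=0$.

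I do not expect any real obstacle here: the proof is a direct substitution of the space-form curvature into identities already established in Proposition \ref{propValProprii}. The only point requiring a little care is the systematic use of the orthogonality between $TM$ and the normal bundle, which is exactly what forces every curvature term to collapse.
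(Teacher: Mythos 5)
Your proposal is correct and is exactly the argument the paper intends: the corollary is stated as an immediate specialization of items (3) and (4) of Proposition \ref{propValProprii} to a space form, where $R^N(X,Y)H$ and $\left(R^N(\cdot,H)\cdot\right)^T$ vanish by the orthogonality of $H$ to $TM$. Both of your curvature computations are consistent with the paper's sign convention, and in any case the conclusion is convention-independent since each curvature term is normal (or zero) and hence has vanishing tangential part.
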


In the particular case of surfaces, we get a stronger result.

\begin{proposition}
\label{flat-pseudo}
Let $\varphi:M^2\to N^n$ be a surface. If $\nabla A_H=0$, then $M$ is pseudoumbilical or flat.
\end{proposition}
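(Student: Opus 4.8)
The plan is to run the argument through the dichotomy of principal curvatures supplied by Proposition \ref{propValProprii}. Since $\nabla A_H=0$, that proposition tells me that the principal curvatures $\lambda_1\geq\lambda_2$ of the surface are constant functions and that $M$ is $CMC$, so $|H|$ is constant as well. Because $M$ is connected and $\lambda_1-\lambda_2$ is a nonnegative constant, exactly one of the following holds globally: $\lambda_1=\lambda_2$ everywhere, or $\lambda_1>\lambda_2$ everywhere. I will treat these two cases separately, matching them to the two conclusions ``pseudoumbilical'' and ``flat''.

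In the first case, $A_H=\lambda I$ for the common constant value $\lambda=\lambda_1=\lambda_2$. To identify $\lambda$ I compute the trace in two ways: on one hand $\trace A_H=2\lambda$, and on the other hand the general identity $\trace A_H=m|H|^2$, which for $m=2$ reads $\trace A_H=2|H|^2$. (This identity can be read off from the trace of the formula $S_2=-\frac{m^2}{2}|H|^2 I+2mA_H$ together with the stated value $\trace S_2=m^2|H|^2\left(2-\frac{m}{2}\right)$.) Comparing gives $\lambda=|H|^2$, hence $A_H=|H|^2 I$, which is precisely the pseudoumbilical condition.

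In the second case the eigenvalues are distinct constants, so the eigendistributions $E_1,E_2$ of $A_H$ are smooth rank-one distributions and, locally, I may choose smooth unit eigenvector fields $e_1\in E_1$ and $e_2\in E_2$. The key point is that $\nabla A_H=0$ together with constancy of the eigenvalues forces each $E_i$ to be a parallel distribution: if $V$ is a local section of $E_i$, then for every $Y$ one has $A_H(\nabla_Y V)=\nabla_Y(A_H V)-(\nabla_Y A_H)(V)=\lambda_i\,\nabla_Y V$, so $\nabla_Y V\in E_i$. Since $e_i$ has unit length, $\langle\nabla_X e_i,e_i\rangle=0$, and as $E_i$ is one-dimensional and spanned by $e_i$ the vector $\nabla_X e_i$, lying in $E_i$ and orthogonal to $e_i$, must vanish. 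Thus $\{e_1,e_2\}$ is a parallel orthonormal frame, whence $R^M\equiv 0$ and $M$ is flat.

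The only delicate point is the smoothness and well-definedness of the eigenvector fields, but this is exactly what the distinct-constant-eigenvalue hypothesis guarantees, and the curvature computation is local so no global frame is needed; connectedness of $M$ is what rules out the two cases coexisting and thereby forces the clean disjunction in the statement.
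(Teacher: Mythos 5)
Your proof is correct, but the mechanism you use in the non-umbilical case is genuinely different from the paper's. Both arguments begin identically: invoke Proposition \ref{propValProprii} to get that $\lambda_1$ and $\lambda_2$ are constant, and split into the cases $\lambda_1=\lambda_2$ and $\lambda_1>\lambda_2$ (constancy makes the dichotomy global, as you observe). In the equal case the paper simply declares pseudoumbilicality obvious, whereas you justify it via $\trace A_H=2|H|^2$, pinning down $\lambda=|H|^2$ --- a welcome extra line, since pseudoumbilical means $A_H=|H|^2I$, not merely $A_H=\lambda I$. In the case $\lambda_1>\lambda_2$, the paper uses the fact that a parallel tensor commutes with the curvature operator, $R(X,Y)A_H(Z)=A_H(R(X,Y)Z)$, and evaluates this identity on a local eigenframe to obtain $(\lambda_1-\lambda_2)K=0$, hence $K=0$. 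You instead show that each eigendistribution is parallel (using $\nabla A_H=0$ together with constancy of the $\lambda_i$), extract local parallel unit eigenvector fields, and conclude $R\equiv 0$ because a parallel orthonormal frame annihilates the curvature tensor. Both are sound; the paper's computation is shorter and purely algebraic, while your argument is more geometric and yields slightly more, namely local parallelizability of $(M,g)$ by an eigenframe of $A_H$, which is precisely the picture behind the line-of-curvature coordinates and the isoparametric immersion results appearing later in the paper. Your concern about smoothness of the eigenvector fields is correctly dispatched: with distinct constant eigenvalues the eigenprojections, e.g. $\left(A_H-\lambda_2 I\right)/\left(\lambda_1-\lambda_2\right)$, are smooth tensor fields, so smooth local unit sections of each eigendistribution exist, and the curvature computation is local.
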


\begin{proof}
Let $\lambda_1\geq\lambda_2$ be the principal curvatures of $M$. Since $\nabla A_H=0$, from Proposition $\ref{propValProprii}$ we have that $\lambda_1$ and $\lambda_2$ are constant functions on $M$.

If $\lambda_1=\lambda_2$, obviously, $M$ is pseudoumbilical.

If $\lambda_1>\lambda_2$, around any point of $M$ we can consider a local orthonormal frame field $\left\{E_i\right\}_{i\in\{1,2\}}$ which diagonalize $A_H$, i.e, $A_H\left(E_i\right)=\lambda_i E_i$, for $i\in\{1,2\}$.

Using Ricci's equation we get
$$
R(X,Y)A_H(Z)=A_H(R(X,Y)Z),
$$
for any $X,Y,Z\in C(TM)$ and then
\begin{align*}
  \lambda_i R\left(E_1,E_2\right)E_i= & A_H\left(R\left(E_1,E_2\right)E_i\right) \\
  = & A_H\left(R(E_1,E_2,E_1,E_i)E_1+R(E_1,E_2,E_2,E_i)E_2\right) \\
  = & \lambda_1 R(E_1,E_2,E_1,E_i)E_1+\lambda_2 R(E_1,E_2,E_2,E_i)E_2,
\end{align*}
for $i\in\{1,2\}$.

From the both choices of $i$, $i=1$ or $i=2$, we obtain $K=0$ on $U$, where $K$ is the Gaussian curvature of $M$ given by $K=R\left(E_1,E_2,E_1,E_2\right)$.
\end{proof}

If $\left(M^m,g\right)$ is a Riemannian manifold and $T$ is a parallel symmetric tensor field of type $(1,1)$, then its eigenvalue functions $\lambda_1\geq \cdots \geq \lambda_m$ are constant functions on $M$ and, obviously, $T$ is a Codazzi tensor field. If $m=2$, the converse also holds.

\begin{proposition}
Let $\left(M^2,g\right)$ be a surface and consider $T$ a symmetric tensor field of type $(1,1)$. Let $\lambda_1\geq \lambda_2$ be the eigenvalue functions of $T$. If $\lambda_1$ and $\lambda_2$ are constant functions on $M$ and $T$ is a Codazzi tensor field, then $\nabla T=0$. Moreover, if $\lambda_1>\lambda_2$, then $\left(M^2,g\right)$ is flat.
\end{proposition}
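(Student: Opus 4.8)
The plan is to reduce everything to the behaviour of the connection form of a local frame that diagonalizes $T$, and the whole argument splits according to whether the (constant) eigenvalues coincide or not. I would first dispose of the degenerate case $\lambda_1=\lambda_2$: here $T=\lambda_1 I$ with $\lambda_1$ a constant, so for all $X,Y$ one has $(\nabla_X T)(Y)=(X\lambda_1)\,Y=0$, giving $\nabla T=0$ at once; the flatness assertion does not apply in this case since it is stated only under $\lambda_1>\lambda_2$. Note that it is the constancy of $\lambda_1,\lambda_2$ (rather than mere continuity of the eigenvalue functions) that makes the dichotomy $\lambda_1=\lambda_2$ versus $\lambda_1>\lambda_2$ hold globally, so exactly one of the two cases occurs on all of $M$.

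The heart of the matter is the case $\lambda_1>\lambda_2$. Because the two eigenvalues are distinct constants, around each point of $M$ there is a local orthonormal frame $\{E_1,E_2\}$ with $T(E_i)=\lambda_i E_i$, $i\in\{1,2\}$. I would encode the Levi-Civita connection through the single $1$-form $\omega(X)=\langle\nabla_X E_1,E_2\rangle$, so that $\nabla_X E_1=\omega(X)E_2$ and $\nabla_X E_2=-\omega(X)E_1$. A short computation that uses only the constancy of $\lambda_1,\lambda_2$ then gives
\[
(\nabla_X T)(E_1)=(\lambda_1-\lambda_2)\,\omega(X)\,E_2,\qquad (\nabla_X T)(E_2)=(\lambda_1-\lambda_2)\,\omega(X)\,E_1 .
\]

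The key step is to insert these into the Codazzi identity $(\nabla_X T)(Y)=(\nabla_Y T)(X)$. Choosing $X=E_1$ and $Y=E_2$ produces $(\lambda_1-\lambda_2)\,\omega(E_1)\,E_1=(\lambda_1-\lambda_2)\,\omega(E_2)\,E_2$; since $\lambda_1-\lambda_2\neq 0$ and $E_1,E_2$ are linearly independent, this forces $\omega(E_1)=\omega(E_2)=0$, i.e. $\omega\equiv 0$ on the chart. Feeding $\omega\equiv 0$ back into the two displayed formulas yields $(\nabla_X T)(E_i)=0$ for all $X$ and all $i$, hence $\nabla T=0$ on each such neighbourhood and therefore, since $\nabla T$ is globally defined and vanishes near every point, on all of $M$.

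Finally, $\omega\equiv 0$ says precisely that $\nabla_X E_i=0$ for every $X$, so the diagonalizing frame is parallel. Then $R(E_1,E_2)E_j=0$ for $j\in\{1,2\}$, and the Gaussian curvature $K=R(E_1,E_2,E_1,E_2)$ vanishes; as these charts cover $M$, the surface is flat. I expect the only genuinely delicate point to be the linear-algebra extraction of $\omega\equiv 0$ from the Codazzi condition, which is where the strict inequality $\lambda_1>\lambda_2$ is essential; the remaining steps are routine bookkeeping with the connection form $\omega$.
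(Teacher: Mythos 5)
Your proof is correct and takes essentially the same route as the paper's: both arguments diagonalize $T$ by a local orthonormal frame, differentiate the eigenvector equations using the constancy of $\lambda_1,\lambda_2$, and invoke the Codazzi condition to force the connection coefficients to vanish, which yields $\nabla T=0$ and $K=0$ at once. Your connection form $\omega$ is simply a notational packaging of the paper's identity $\left(\nabla_{E_j} T\right)\left(E_i\right) + T\left(\nabla_{E_j} E_i\right)=\lambda_i \nabla_{E_j}E_i$ together with its ``appropriate choices of $i$ and $j$''.
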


\begin{proof}
If $\lambda_1=\lambda_2=\lambda$, it follows that $T=\lambda I$ and $\nabla T=0$.

If $\lambda_1>\lambda_2$, around any point of $M$ we can consider a local orthonormal frame field $\left\{E_i\right\}_{i\in\{1,2\}}$ which diagonalize $T$. By a simple computation, one obtains
\begin{equation}
\label{ecValProp}
\left(\nabla_{E_j} T\right)\left(E_i\right) + T\left(\nabla_{E_j} E_i\right)=\lambda_i \nabla_{E_j}E_i,
\end{equation}
for $i,j\in \{1,2\}$.

Since $T$ is a Codazzi tensor field, for appropriate choices of $i$ and $j$ in $\eqref{ecValProp}$, we get $\nabla_{E_i}E_j=0$, for any $i,j\in \{1,2\}$. It follows that the Gaussian curvature of $M$ vanishes everywhere and $\nabla T=0$ on $M$.
\end{proof}

If $T=A_H$ we have the next result.

\begin{corollary}
\label{corolarAH}
Let $\varphi:M^2\to N^n$ be a surface and $\lambda_1\geq \lambda_2$ be  the principal curvatures of $M$. If $\lambda_1$ and $\lambda_2$ are constant functions on $M$ and $A_H$ is a Codazzi tensor field, then $\nabla A_H=0$.
\end{corollary}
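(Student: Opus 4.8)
The plan is to apply the preceding Proposition directly, taking $T = A_H$, so the entire content of the corollary is the observation that its hypotheses are a special instance of that Proposition's hypotheses. First I would record that the shape operator $A_H$ is genuinely a symmetric tensor field of type $(1,1)$: since the second fundamental form $B$ is symmetric, the defining identity $\langle A_H(X),Y\rangle = \langle B(X,Y),H\rangle$ shows that $A_H$ is self-adjoint with respect to $g$. Thus $A_H$ satisfies the standing assumption that $T$ be symmetric of type $(1,1)$.

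Next I would match the remaining hypotheses. By the definition of principal curvatures adopted just before Proposition \ref{propValProprii} — namely, the principal curvatures of $M$ are the eigenvalue functions of $A_H$ — the functions $\lambda_1 \geq \lambda_2$ appearing in the corollary are precisely the eigenvalue functions of $A_H$, and the corollary assumes them to be constant on $M$. The corollary also assumes directly that $A_H$ is a Codazzi tensor field. Hence the three hypotheses of the Proposition (symmetry, constancy of the eigenvalue functions, and the Codazzi property) all hold for the choice $T = A_H$.

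Applying the Proposition then gives $\nabla A_H = 0$, which is exactly the assertion. There is no substantive obstacle here: the only work is to verify that $A_H$ is an admissible $T$ and that the stated conditions on $\lambda_1, \lambda_2$ and on the Codazzi property are literally those required by the Proposition. I would also note in passing that in the case $\lambda_1 > \lambda_2$ the same Proposition forces $(M^2,g)$ to be flat, although the corollary does not record this stronger conclusion.
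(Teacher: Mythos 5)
Your proof is correct and coincides with the paper's own treatment: the paper introduces the corollary with the phrase ``If $T=A_H$ we have the next result,'' i.e., it is obtained precisely by specializing the preceding proposition to $T=A_H$, exactly as you do. Your verification that $A_H$ is symmetric and that the principal curvatures are by definition its eigenvalue functions is the (implicit) content of that specialization, so nothing is missing.
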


\begin{remark}\label{const-CMC}
If for a submanifold $M^m$ in $N^n$ the principal curvatures of $M$ are constants, then $M$ is $CMC$.
\end{remark}

We note that, in general, $A_H$ is not a Codazzi tensor field. In the following we will study the properties of submanifolds with $A_H$ being a Codazzi tensor field, and their connection with biconservativity, as this is the next natural step after that of $A_H$ being parallel.

We begin with a result which follows easily from the Codazzi equation, equation $\eqref{subvarietateoarecare1}$ and Proposition $\ref{corolarbicons}$.

\begin{proposition}
\label{prop-calc1}
Let $\varphi:M^m\to N^n$ be a submanifold. If $A_H$ is a Codazzi tensor field then
\begin{enumerate}
  \item $A_{\nabla^\perp_X H}(Y)-A_{\nabla^\perp_Y H}(X)=\left(R^N(X,Y)H\right)^T$, for any $X,Y\in C(TM)$;
  \item $\trace \nabla A_H = m\grad \left(|H|^2\right)$;
  \item $\trace A_{\nabla^\perp_\cdot H}(\cdot)=\frac{m}{2}\grad\left(|H|^2\right)-\trace \left(R^N(\cdot,H)\cdot\right)^T$;
  \item $M$ is biconservative if and only $|H|$ is constant.
\end{enumerate}
\end{proposition}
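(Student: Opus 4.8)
The plan is to derive item~(1) from the Codazzi equation of the submanifold, and then to obtain items~(2)--(4) by tracing, by invoking \eqref{subvarietateoarecare1}, and by invoking Proposition~\ref{corolarbicons}, respectively.

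First I would record the Codazzi identity satisfied by $A_H$ for a \emph{general} submanifold. Starting from $\langle A_H X, Y\rangle = \langle B(X,Y), H\rangle$ and differentiating, one splits
$$
\langle (\nabla_X A_H)Y, Z\rangle = \langle (\nabla_X^\perp B)(Y,Z), H\rangle + \langle A_{\nabla_X^\perp H}Y, Z\rangle ,
$$
where $\nabla^\perp B$ is the covariant derivative of the second fundamental form. Antisymmetrizing in $X,Y$, applying the Codazzi equation $(\nabla_X^\perp B)(Y,Z) - (\nabla_Y^\perp B)(X,Z) = (R^N(X,Y)Z)^\perp$, and using the antisymmetry $\langle R^N(X,Y)Z, H\rangle = -\langle R^N(X,Y)H, Z\rangle$, one arrives at
$$
(\nabla_X A_H)Y - (\nabla_Y A_H)X = A_{\nabla_X^\perp H}Y - A_{\nabla_Y^\perp H}X - \left(R^N(X,Y)H\right)^T .
$$
Item~(1) is then immediate: the hypothesis that $A_H$ is Codazzi kills the left-hand side, leaving exactly $A_{\nabla_X^\perp H}Y - A_{\nabla_Y^\perp H}X = (R^N(X,Y)H)^T$.

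For item~(2) I would use that the trilinear form $(X,Y,Z)\mapsto \langle(\nabla_X A_H)Y, Z\rangle$ is totally symmetric: it is symmetric in $Y,Z$ since $A_H$ is self-adjoint, and symmetric in $X,Y$ since $A_H$ is Codazzi, and these two transpositions generate $S_3$. Contracting over an orthonormal frame $\{X_i\}$ and using $\trace A_H = m|H|^2$ then gives
$$
\langle \trace \nabla A_H, Z\rangle = \sum_i \langle (\nabla_{X_i}A_H)X_i, Z\rangle = \sum_i \langle (\nabla_Z A_H)X_i, X_i\rangle = Z\left(\trace A_H\right) = m\langle \grad(|H|^2), Z\rangle ,
$$
which is item~(2). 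Item~(3) follows at once by substituting $\trace \nabla A_H = m\grad(|H|^2)$ into \eqref{subvarietateoarecare1} and solving for $\trace A_{\nabla_\cdot^\perp H}(\cdot)$; alternatively one may contract item~(1) over $\{X_i\}$ directly, the curvature symmetries identifying the resulting sum with $\trace(R^N(\cdot,H)\cdot)^T$.

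Finally, for item~(4) I would combine item~(2) with Proposition~\ref{corolarbicons}(4), according to which $M$ is biconservative iff $2\trace \nabla A_H - \frac{m}{2}\grad(|H|^2)=0$. Substituting $\trace \nabla A_H = m\grad(|H|^2)$ collapses this to $\frac{3m}{2}\grad(|H|^2)=0$, i.e. to the constancy of $|H|$. I expect the only delicate point to be the first step: fixing the signs in the Codazzi relation for $A_H$, in particular the sign gained when transferring $H$ across the curvature tensor, since the statements of all four items hinge on it.
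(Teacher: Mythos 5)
Your proof is correct and takes essentially the same route as the paper, which gives no detailed argument but states that the proposition ``follows easily from the Codazzi equation, equation \eqref{subvarietateoarecare1} and Proposition \ref{corolarbicons}'' --- precisely the three ingredients you use, with your sign in item (1) consistent with the paper's curvature convention $R^N(U,V)=\nabla^N_U\nabla^N_V-\nabla^N_V\nabla^N_U-\nabla^N_{[U,V]}$ (compare Proposition \ref{propValProprii}(3)). The only point to tighten is your parenthetical alternative for item (3): since item (1) is antisymmetric in $X,Y$, one cannot contract it over that pair, but must instead pair $Y$ with $Z$ against an orthonormal frame and sum, which does recover item (3); your main derivation via \eqref{subvarietateoarecare1} is, in any case, exactly what the paper intends.
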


If $N$ is an $n$-dimensional space form, Proposition $\ref{prop-calc1}$ can be rewritten as follows.

\begin{corollary}
Let $\varphi:M^m\to N^n(c)$ be a submanifold. If $A_H$ is a Codazzi tensor field then
\begin{enumerate}
  \item $A_{\nabla^\perp_X H}(Y)-A_{\nabla^\perp_Y H}(X) = 0$, for any $X,Y\in C(TM)$;
  \item $\trace \nabla A_H = m\grad \left(|H|^2\right)$;
  \item $\trace A_{\nabla^\perp_\cdot H}(\cdot)=\frac{m}{2}\grad\left(|H|^2\right)$;
  \item $M$ is biconservative if and only if $|H|$ is constant.
\end{enumerate}
\end{corollary}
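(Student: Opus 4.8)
The plan is to derive this corollary directly from Proposition \ref{prop-calc1} by substituting the explicit curvature tensor of a space form and observing that every term built from $R^N$ evaluated on the normal mean curvature field $H$ projects trivially onto the tangent bundle. Recall that on $N^n(c)$ one has
$$
R^N(U,V)W = c\left(\langle V,W\rangle U - \langle U,W\rangle V\right)
$$
for all $U,V,W\in C(TN)$. The whole argument reduces to feeding this expression into the two curvature-dependent terms of Proposition \ref{prop-calc1} and checking that they vanish.

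First I would treat the curvature contribution in item (1) of Proposition \ref{prop-calc1}. For tangent fields $X,Y$ and the normal field $H$, both $\langle Y,H\rangle$ and $\langle X,H\rangle$ vanish, so $R^N(X,Y)H=c\left(\langle Y,H\rangle X-\langle X,H\rangle Y\right)=0$ identically; in particular its tangent part is zero. Substituting into item (1) of Proposition \ref{prop-calc1} gives $A_{\nabla^\perp_X H}(Y)-A_{\nabla^\perp_Y H}(X)=0$, which is item (1) here.

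Next, for item (3) I would compute $\trace\left(R^N(\cdot,H)\cdot\right)^T$ using an orthonormal tangent frame $\left\{E_i\right\}_{i=\overline{1,m}}$. Since $R^N(E_i,H)E_i=c\left(\langle H,E_i\rangle E_i-\langle E_i,E_i\rangle H\right)=-cH$ is normal, its tangent part is zero for each $i$, so the trace vanishes. Inserting this into item (3) of Proposition \ref{prop-calc1} deletes the curvature correction and leaves $\trace A_{\nabla^\perp_\cdot H}(\cdot)=\frac{m}{2}\grad\left(|H|^2\right)$.

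Finally, item (2) is literally item (2) of Proposition \ref{prop-calc1} and so is inherited unchanged, while item (4), the equivalence of biconservativity with $|H|$ being constant, is exactly item (4) of that proposition and carries over verbatim. I expect no real obstacle: the entire content is the elementary fact that in a space form the curvature operator sends the normal field $H$ into a tangentially trivial combination, so the two curvature terms of Proposition \ref{prop-calc1} simply drop out.
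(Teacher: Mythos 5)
Your proposal is correct and follows exactly the route the paper intends: the corollary is stated as an immediate specialization of Proposition \ref{prop-calc1} to $N^n(c)$, and your substitution of $R^N(U,V)W=c\left(\langle V,W\rangle U-\langle U,W\rangle V\right)$ showing that $R^N(X,Y)H=0$ and that $R^N(E_i,H)E_i$ is normal (hence both curvature terms drop out) supplies precisely the details the paper leaves implicit.
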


We end this section considering those submanifolds in space forms having $H$ parallel in the normal bundle. Using the Codazzi equation, we get the following result.

\begin{proposition}
Let $\varphi:M^m\to N^n(c)$ be a submanifold with $\nabla^\perp H=0$. Then
\begin{enumerate}
  \item $M$ is a biconservative submanifold;
  \item $A_H$ is a Codazzi tensor field;
  \item $\langle \left(\nabla A_H\right)(\cdot,\cdot), \cdot\rangle$ is totally symmetric;
  \item $\trace \nabla A_H = 0$.
\end{enumerate}
\end{proposition}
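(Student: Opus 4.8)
The plan is to reduce every assertion to one bookkeeping identity relating $\nabla A_H$ to the covariant derivative of the second fundamental form $B$, together with the fact that in a space form the Codazzi equation degenerates to a plain symmetry. First I would differentiate the defining relation $\langle A_H(Y),Z\rangle=\langle B(Y,Z),H\rangle$ and split the ambient derivative into its tangent and normal parts. This yields
\begin{equation*}
\langle(\nabla_X A_H)(Y),Z\rangle=\langle(\nabla_X B)(Y,Z),H\rangle+\langle B(Y,Z),\nabla^\perp_X H\rangle ,
\end{equation*}
and the hypothesis $\nabla^\perp H=0$ kills the last term, so $\langle(\nabla_X A_H)(Y),Z\rangle=\langle(\nabla_X B)(Y,Z),H\rangle$. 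This single formula is the engine for all four items.

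For item (2) I would invoke the Codazzi equation: in $N^n(c)$ the curvature $R^N(X,Y)Z=c(\langle Y,Z\rangle X-\langle X,Z\rangle Y)$ is tangent to $M$ whenever $X,Y,Z$ are, so its normal component vanishes and Codazzi reads $(\nabla_X B)(Y,Z)=(\nabla_Y B)(X,Z)$. Feeding this into the displayed identity gives $\langle(\nabla_X A_H)(Y),Z\rangle=\langle(\nabla_Y A_H)(X),Z\rangle$ for every $Z$, i.e. $A_H$ is a Codazzi tensor field. Item (3) then comes for free: since $A_H$ is symmetric, $\nabla_X A_H$ is symmetric as well, so $\langle(\nabla_X A_H)(Y),Z\rangle$ is automatically symmetric in the pair $(Y,Z)$, while item (2) supplies symmetry in $(X,Y)$. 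The transpositions $(Y\,Z)$ and $(X\,Y)$ generate the whole symmetric group on the three slots, hence $\langle(\nabla A_H)(\cdot,\cdot),\cdot\rangle$ is totally symmetric.

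For items (4) and (1) I would turn to the trace identities already recorded. Since $\nabla^\perp H=0$ forces $X|H|^2=2\langle\nabla^\perp_X H,H\rangle=0$, the function $|H|$ is constant, whence $\grad(|H|^2)=0$; and $\nabla^\perp_\cdot H=0$ makes $A_{\nabla^\perp_\cdot H}(\cdot)=0$. Substituting both facts into \eqref{subvarietateoarecare2} gives $\trace\nabla A_H=0$, which is item (4). Finally, biconservativity is equivalent, by item (4) of Proposition \ref{corolarbicons}, to $2\trace\nabla A_H-\frac{m}{2}\grad(|H|^2)=0$; both summands have just been shown to vanish, so $M$ is biconservative, proving item (1).

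The only place any real care is needed is in correctly separating the tangent and normal parts when deriving the displayed identity, and in matching the paper's sign convention for $R^N$ so that the Codazzi equation is applied with the right sign. Once that formula is in hand the rest of the argument is essentially immediate, and no genuinely hard step remains.
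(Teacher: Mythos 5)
Your proof is correct and follows exactly the route the paper intends: the paper offers no written proof, only the remark that the result follows ``using the Codazzi equation,'' and your argument---differentiating $\langle A_H(Y),Z\rangle=\langle B(Y,Z),H\rangle$, killing the $\nabla^\perp H$ term, invoking the degenerate Codazzi equation in $N^n(c)$, and then feeding $\grad(|H|^2)=0$ and $A_{\nabla^\perp_\cdot H}(\cdot)=0$ into \eqref{subvarietateoarecare2} and Proposition \ref{corolarbicons}---is precisely that argument carried out in full. No gaps; all four items are established correctly.
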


\section{Properties of biconservative surfaces}

In this section we will study biconservative surfaces and determine some of their characteristic properties.

One of our main results is Theorem $\ref{teorema2}$. In order to prove it, we begin with a lemma which holds for an arbitrary symmetric tensor field $T$ of type $(1,1)$, we present some properties satisfied when $\Div T=0$, and then we finish by bringing them together in Theorem \ref{teorema1}.

\begin{lemma}
\label{lema1}
Let $\left(M^2,g\right)$ be a surface and let $T$ be a symmetric tensor field of type $(1,1)$. We have
\begin{enumerate}
  \item $$
        \Div T=\grad t-\langle Z_{12},X_2\rangle X_1+\langle Z_{12},X_1\rangle X_2,
        $$
      where $t=\trace T$, $\left\{X_1,X_2\right\}$ is a local orthonormal frame field on $M$ and
        $$
        Z_{12}=\left(\nabla_{X_1}T\right)\left(X_2\right)-\left(\nabla_{X_2}T\right)\left(X_1\right);
        $$
  \item $\langle T(\partial_z),\partial_z\rangle$ is holomorphic if and only if $\grad t=2\Div T$;
  \item $\langle T(\partial_z),\partial_z\rangle$ is holomorphic if and only if
        $$
        \grad t=2\langle Z_{12},X_2\rangle X_1-2\langle Z_{12},X_1\rangle X_2.
        $$
\end{enumerate}
\end{lemma}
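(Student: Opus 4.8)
The plan is to handle the three items in order: item (1) is a direct computation in the given frame, item (2) is proved in a local isothermal chart, and item (3) then drops out as a purely algebraic consequence of (1) and (2). For item (1), I would begin from the frame expression $\Div T=\left(\nabla_{X_1}T\right)\left(X_1\right)+\left(\nabla_{X_2}T\right)\left(X_2\right)$ and compare its two components against those of the proposed right-hand side. The only inputs are that $\nabla_X T$ is again a symmetric $(1,1)$-tensor and that contraction commutes with covariant differentiation, so $\trace\left(\nabla_{X_i}T\right)=X_i(t)$. Using symmetry to rewrite $\langle\left(\nabla_{X_2}T\right)\left(X_1\right),X_2\rangle=\langle\left(\nabla_{X_2}T\right)\left(X_2\right),X_1\rangle$, the $X_1$-component of $\Div T$ collapses to $X_1(t)-\langle Z_{12},X_2\rangle$, and the $X_2$-component to $X_2(t)+\langle Z_{12},X_1\rangle$; matching against $\grad t-\langle Z_{12},X_2\rangle X_1+\langle Z_{12},X_1\rangle X_2$ finishes (1). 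This step is routine.

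For item (2) I would pass to a local isothermal coordinate $z=x+iy$, so that $g=\rho^2\left(dx^2+dy^2\right)$, $\partial_z=\tfrac12\left(\partial_x-i\partial_y\right)$, and the orthonormal frame is $X_1=\rho^{-1}\partial_x$, $X_2=\rho^{-1}\partial_y$. The structural fact I rely on is that for a conformal metric the only nonvanishing Christoffel symbols in the coordinates $(z,\bar z)$ are $\Gamma^z_{zz}$ and $\Gamma^{\bar z}_{\bar z\bar z}$, whence $\nabla_{\partial_{\bar z}}\partial_z=0$. Extending $\nabla$, $T$ and $\langle,\rangle$ complex-linearly and differentiating the Hopf function $f=\langle T(\partial_z),\partial_z\rangle$, the two terms carrying $\nabla_{\partial_{\bar z}}\partial_z$ drop out, leaving
$$\partial_{\bar z} f=\langle\left(\nabla_{\partial_{\bar z}}T\right)(\partial_z),\partial_z\rangle,$$
so $f$ is holomorphic exactly when this expression vanishes.

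It remains to translate this single complex equation into a statement about $\grad t$ and $\Div T$. Writing $\partial_{\bar z}=\tfrac{\rho}{2}\left(X_1+iX_2\right)$ and $\partial_z=\tfrac{\rho}{2}\left(X_1-iX_2\right)$, the vanishing above is equivalent (up to the nonzero factor $\rho^3/8$) to $\langle(A+iB)\left(X_1-iX_2\right),X_1-iX_2\rangle=0$, with $A=\nabla_{X_1}T$ and $B=\nabla_{X_2}T$. Expanding and separating real and imaginary parts, and using the symmetry of $A$ and $B$, gives two real scalar equations. I would then compute the frame components of $\grad t-2\Div T$, using $X_i(t)=\trace\left(\nabla_{X_i}T\right)$ and $\Div T=AX_1+BX_2$, and observe that its $X_1$-component equals (up to sign) the real part and its $X_2$-component equals the imaginary part. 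Hence holomorphicity is equivalent to $\grad t-2\Div T=0$, proving (2). Item (3) is then immediate: substituting the formula from (1) into $\grad t=2\Div T$ and simplifying yields precisely $\grad t=2\langle Z_{12},X_2\rangle X_1-2\langle Z_{12},X_1\rangle X_2$.

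The only genuinely delicate part is the bookkeeping in item (2): keeping the complex-linear extensions straight, verifying $\nabla_{\partial_{\bar z}}\partial_z=0$, and carrying out the real/imaginary separation so that the two resulting scalar equations line up exactly with the two components of $\grad t-2\Div T$. A secondary point worth noting is that the statement is well posed: under a holomorphic change of isothermal coordinate, $f$ transforms as the coefficient of a quadratic differential, so $\partial_{\bar z} f=0$ is chart-independent, consistent with the manifestly geometric condition $\grad t=2\Div T$.
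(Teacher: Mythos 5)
Your proposal is correct and follows essentially the same route as the paper: item (2) is proved in a local isothermal chart by computing $\partial_{\overline{z}}\ \langle T(\partial_z),\partial_z\rangle$ and matching its real and imaginary parts with the frame components of $\grad t-2\Div T$, while items (1) and (3) are the direct frame computations the paper leaves to the reader. Your use of the complexified connection and the identity $\nabla_{\partial_{\overline{z}}}\partial_z=0$ merely streamlines the paper's explicit Christoffel-symbol computation; the substance is identical.
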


\begin{proof}
Since the first and the third items follow by standard computation, we will only give the proof of the second item.

As $M$ is an oriented surface, locally, the metric $g$ can be written as $g=e^{2\rho}\left(dx^2+dy^2\right)$, where $(x,y)$ are local coordinates positively oriented and $\rho=\rho(x,y)$ is a smooth function. As usually, we denote
$$
\partial_z=\frac{1}{2}\left(\partial_x-i\partial_y\right) \text{ and } \partial_{\overline{z}}=\frac{1}{2}\left(\partial_x+i\partial_y\right).
$$
Therefore, $\langle T(\partial_z),\partial_z\rangle$ is holomorphic if and only if $\partial_{\overline{z}}\ \langle T(\partial_z),\partial_z\rangle=0$. We can see that the Christoffel symbols are given by
$$
\Gamma_{12}^2=\Gamma_{11}^1= -\Gamma_{22}^1 =\rho_x,
$$
where $\rho_x=\frac{\partial \rho}{\partial x}$ and
$$
\Gamma_{12}^1=\Gamma_{22}^2=-\Gamma_{11}^2=\rho_y.
$$
Thus, we obtain
\begin{align*}
  \nabla_{\partial_x}\partial_y = & \nabla_{\partial_y}\partial_x
   = \Gamma_{12}^1 \partial_x +\Gamma_{12}^2 \partial_y = \rho_y \partial_x +\rho_x\partial_y,\\\\
  \nabla_{\partial_x}\partial_x = & \Gamma_{11}^1 \partial_x +\Gamma_{11}^2 \partial_y
  = \rho_x \partial_x -\rho_y\partial_y, \\\\
  \nabla_{\partial_y}\partial_y = & \Gamma_{22}^1 \partial_x +\Gamma_{22}^2 \partial_y
  = -\rho_x \partial_x +\rho_y\partial_y.
\end{align*}
After some straightforward computations, we get
$$
\partial_{\overline{z}}\ \langle T(\partial_z),\partial_z\rangle =\frac{e^{2\rho}}{8}\left(-t_x + 2\langle \Div T,\partial_x \rangle +i\left(t_y - 2\langle \Div T,\partial_y \rangle \right)\right),
$$
where $t=\trace T$. Now, it is easy to see that $\langle T(\partial_z),\partial_z\rangle$ is holomorphic if and only if $\grad t=2\Div T$.

\end{proof}

\begin{remark}
We note that $\langle Z_{12},X_2\rangle X_1-\langle Z_{12},X_1\rangle X_2$ does not depend on the local orthonormal frame field $\left\{X_1,X_2\right\}$. Thus, there exists an unique global vector field $Z$ such that for any local orthonormal frame field, $\left\{X_1,X_2\right\}$, on its domain of definition we have
$$
Z=\langle Z_{12},X_2\rangle X_1-\langle Z_{12},X_1\rangle X_2.
$$
Therefore, we obtain the global formula
$$
\Div T=\grad t-Z.
$$
\end{remark}

Lemma $\ref{lema1}$ is the key ingredient to prove the following four propositions.

\begin{proposition}
\label{Prop-t=const}
Let $\left(M^2,g\right)$ be a surface and consider $T$ be a symmetric tensor field of type $(1,1)$. If $t$ is constant then the following relations are equivalent
\begin{enumerate}
  \item $T$ is a Codazzi tensor field;
  \item $\langle T\left(\partial_z\right),\partial_z \rangle$ is holomorphic;
  \item $\Div T=0$.
\end{enumerate}
\end{proposition}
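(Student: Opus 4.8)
The plan is to read off all three equivalences directly from Lemma~\ref{lema1}, the whole simplification coming from the single observation that $t$ constant means $\grad t=0$.

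First I would dispatch the equivalence $(2)\Leftrightarrow(3)$. By the second item of Lemma~\ref{lema1}, holomorphicity of $\langle T(\partial_z),\partial_z\rangle$ is equivalent to $\grad t=2\Div T$. Since $t$ is constant, $\grad t=0$, so this condition reduces to $\Div T=0$, which is exactly item $(3)$. No further work is needed here.

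For $(1)\Leftrightarrow(3)$ I would use the first item of Lemma~\ref{lema1}, which with $\grad t=0$ becomes
$$
\Div T=-\langle Z_{12},X_2\rangle X_1+\langle Z_{12},X_1\rangle X_2=-Z,
$$
where $Z_{12}=\left(\nabla_{X_1}T\right)\left(X_2\right)-\left(\nabla_{X_2}T\right)\left(X_1\right)$ and $Z$ is the globally defined field from the remark following the lemma. The point to argue carefully is that on a surface the Codazzi condition is governed entirely by the single quantity $Z_{12}$: the map $(X,Y)\mapsto\left(\nabla_X T\right)(Y)-\left(\nabla_Y T\right)(X)$ is bilinear and antisymmetric, hence on a $2$-dimensional tangent space it is determined by its value $Z_{12}$ on an orthonormal frame $\left\{X_1,X_2\right\}$. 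Therefore $T$ is a Codazzi tensor field if and only if $Z_{12}=0$. Since $\langle Z_{12},X_1\rangle$ and $\langle Z_{12},X_2\rangle$ are precisely the components of the tangent vector $Z_{12}$ in that frame, the vanishing $Z_{12}=0$ is equivalent to $Z=0$, which by the displayed formula is equivalent to $\Div T=0$. This gives $(1)\Leftrightarrow(3)$ and closes the cycle.

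The argument is entirely formal, so I do not expect a genuine obstacle; the only step deserving explicit justification is the reduction of the Codazzi condition ``for all $X,Y$'' to the single frame equation $Z_{12}=0$, which rests on the antisymmetry of $\left(\nabla_X T\right)(Y)-\left(\nabla_Y T\right)(X)$ together with $\dimension M=2$. Everything else is just the substitution $\grad t=0$ into the two characterizations already established in Lemma~\ref{lema1}.
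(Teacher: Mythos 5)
Your proof is correct and follows essentially the same route the paper intends: the paper gives no separate argument for this proposition, stating only that Lemma~\ref{lema1} is the key ingredient, and your substitution of $\grad t=0$ into items (1) and (2) of that lemma, together with the (correctly justified) observation that on a surface the Codazzi condition reduces to $Z_{12}=0$, is exactly that argument carried out.
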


\begin{proposition}
\label{Prop-divT=0}
Let $\left(M^2,g\right)$ be a surface and consider $T$ a symmetric tensor field of type $(1,1)$. If $\Div T=0$ then the following relations are equivalent
\begin{enumerate}
  \item $T$ is a Codazzi tensor field;
  \item $\langle T\left(\partial_z\right),\partial_z \rangle$ is holomorphic;
  \item $t$ is constant.
\end{enumerate}
\end{proposition}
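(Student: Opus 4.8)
The plan is to reduce all three conditions to statements about the scalar $t=\trace T$ and the globally defined vector field $Z$ introduced in the Remark following Lemma~\ref{lema1}, and then chain the resulting equivalences together. The key preliminary observation is that, on a surface, $T$ is a Codazzi tensor field if and only if $Z_{12}=0$, hence if and only if $Z=0$. Indeed, the Codazzi condition $(\nabla_X T)(Y)=(\nabla_Y T)(X)$ is tensorial and antisymmetric in $X,Y$, so on a $2$-dimensional manifold it reduces to the single relation $(\nabla_{X_1}T)(X_2)=(\nabla_{X_2}T)(X_1)$ for an orthonormal frame $\{X_1,X_2\}$, which is exactly $Z_{12}=0$; and since $\{X_1,X_2\}$ is a basis, the identity $Z=\langle Z_{12},X_2\rangle X_1-\langle Z_{12},X_1\rangle X_2$ shows that $Z=0$ is equivalent to $Z_{12}=0$.

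Next I would use the global formula $\Div T=\grad t-Z$ from the Remark. Since we are assuming $\Div T=0$, this gives $\grad t=Z$. Consequently $Z=0$ if and only if $\grad t=0$, i.e.\ $T$ is Codazzi if and only if $t$ is constant; this establishes the equivalence of (1) and (3).

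For the remaining condition, I would invoke the second item of Lemma~\ref{lema1}, which states that $\langle T(\partial_z),\partial_z\rangle$ is holomorphic if and only if $\grad t=2\Div T$. Under the hypothesis $\Div T=0$ this becomes $\grad t=0$, that is, $t$ constant, giving the equivalence of (2) and (3). Combining the two equivalences yields that (1), (2) and (3) are all equivalent.

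There is no genuine obstacle here: the statement is essentially a corollary of Lemma~\ref{lema1} and its Remark, and is the natural companion of Proposition~\ref{Prop-t=const} (which fixes $t$ constant in place of $\Div T=0$). The only point requiring a little care is the identification of the Codazzi property with the vanishing of $Z$ on a surface, which relies on the low dimension through the fact that an antisymmetric tensorial expression in two tangent arguments is determined by its value on a single pair of frame vectors.
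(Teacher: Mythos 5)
Your proof is correct and follows exactly the route the paper intends: the paper omits an explicit proof, stating only that Lemma~\ref{lema1} (together with its Remark giving $\Div T=\grad t-Z$) is the key ingredient, and your argument---identifying the Codazzi condition with $Z=0$ via antisymmetry in dimension two, then using $\grad t=Z$ and item (2) of the lemma---is precisely that derivation. No gaps; the only point needing care (that $Z_{12}=0$ on one frame pair suffices) is the one you address.
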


\begin{proposition}
\label{Prop-funct-olomorfa}
Let $\left(M^2,g\right)$ be a surface and consider $T$ a symmetric tensor field of type $(1,1)$. If $\langle T\left(\partial_z\right),\partial_z \rangle$ is holomorphic then the following relations are equivalent
\begin{enumerate}
  \item $T$ is a Codazzi tensor field;
  \item $t$ is constant;
  \item $\Div T=0$.
\end{enumerate}
\end{proposition}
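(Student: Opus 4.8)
The plan is to feed the holomorphicity hypothesis into the two identities already recorded for a symmetric $(1,1)$-tensor field on a surface, and then read off all three equivalences from a pair of algebraic relations. First, by Lemma \ref{lema1}(2), the holomorphicity of $\langle T(\partial_z),\partial_z\rangle$ is precisely the relation $\grad t = 2\Div T$. Second, by Lemma \ref{lema1}(1) together with the Remark following it, one has the global formula
\begin{equation*}
\Div T = \grad t - Z,
\end{equation*}
where $Z$ is the globally defined vector field which in any local orthonormal frame equals $\langle Z_{12},X_2\rangle X_1-\langle Z_{12},X_1\rangle X_2$. Before combining these, I would record the elementary but crucial observation that on a surface $T$ is a Codazzi tensor field if and only if $Z=0$: the Codazzi difference $(\nabla_X T)(Y)-(\nabla_Y T)(X)$ is antisymmetric and tensorial in $X,Y$, so it vanishes identically as soon as it vanishes on the basis pair $(X_1,X_2)$, i.e. as soon as $Z_{12}=0$; and $Z_{12}=0$ is equivalent to $Z=0$ because $\{X_1,X_2\}$ is orthonormal.

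The heart of the argument is then to substitute the holomorphicity relation into the global formula. Replacing $\grad t$ by $2\Div T$ in $\Div T=\grad t-Z$ yields $\Div T=2\Div T-Z$, that is $Z=\Div T$; simultaneously the holomorphicity relation reads $\grad t=2\Div T$. Thus, under the standing hypothesis, both $Z$ and $\grad t$ are proportional to $\Div T$. From here the equivalences are immediate. Condition (2), namely $t$ constant, means $\grad t=0$, which by $\grad t=2\Div T$ is equivalent to $\Div T=0$, i.e. condition (3). Condition (1), namely $T$ Codazzi, means $Z=0$, which by $Z=\Div T$ is again equivalent to $\Div T=0$, i.e. condition (3). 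Hence (1) $\Leftrightarrow$ (2) $\Leftrightarrow$ (3), and each is equivalent to $\Div T=0$.

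I do not expect a genuine obstacle here, since the statement is a formal consequence of Lemma \ref{lema1}: once the holomorphicity hypothesis is written as $\grad t=2\Div T$ and combined with $\Div T=\grad t-Z$, everything reduces to direct substitution. The only point that requires care is the identification of the Codazzi property with the vanishing of $Z$ (equivalently of $Z_{12}$), which genuinely uses the two-dimensionality of $M$; all remaining steps are purely algebraic.
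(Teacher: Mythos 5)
Your proposal is correct and takes essentially the same approach as the paper: the paper gives no separate proof of this proposition, stating only that Lemma \ref{lema1} is the key ingredient, and your argument is exactly that intended deduction made explicit. Writing holomorphicity as $\grad t = 2\Div T$, combining it with the global identity $\Div T = \grad t - Z$ to get $Z = \Div T$ and $\grad t = 2\Div T$, and identifying the Codazzi condition with $Z_{12}=0$ (equivalently $Z=0$, which genuinely uses $\dim M=2$) is precisely how the four propositions follow from Lemma \ref{lema1}.
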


\begin{proposition}
\label{Prop-T-Codazzi}
Let $\left(M^2,g\right)$ be a surface and consider $T$ a symmetric tensor field of type $(1,1)$. If $T$ is a Codazzi tensor field then the following relations are equivalent
\begin{enumerate}
  \item $t$ is constant;
  \item $\langle T\left(\partial_z\right),\partial_z \rangle$ is holomorphic;
  \item $\Div T=0$.
\end{enumerate}
\end{proposition}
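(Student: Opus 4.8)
The plan is to observe that the hypothesis of $T$ being a Codazzi tensor field forces the vector field $Z_{12}$ appearing in Lemma \ref{lema1} to vanish identically, which collapses the divergence formula of that lemma to the single identity $\Div T = \grad t$. All three equivalences then follow by feeding this identity into the holomorphicity and divergence characterizations already established in Lemma \ref{lema1}.

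First I would unwind the Codazzi hypothesis. That $T$ is a Codazzi tensor field means $(\nabla_X T)(Y) = (\nabla_Y T)(X)$ for all $X,Y \in C(TM)$. Evaluating this on a local orthonormal frame field $\{X_1,X_2\}$ gives
$$
Z_{12} = \left(\nabla_{X_1} T\right)\left(X_2\right) - \left(\nabla_{X_2} T\right)\left(X_1\right) = 0.
$$
Substituting $Z_{12}=0$ into the divergence formula of Lemma \ref{lema1}(1), the last two terms drop out and we are left with the key identity
$$
\Div T = \grad t.
$$

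From here the three statements link up immediately. For (1) $\Leftrightarrow$ (3), since $\Div T = \grad t$ we have $\Div T = 0$ precisely when $\grad t = 0$, and because $M$ is connected this holds exactly when $t$ is constant. For the role of (2), Lemma \ref{lema1}(2) asserts that $\langle T(\partial_z),\partial_z\rangle$ is holomorphic if and only if $\grad t = 2\Div T$; combining this with $\Div T = \grad t$ turns the condition into $\grad t = 2\grad t$, that is $\grad t = 0$, which is once more equivalent to $t$ being constant. Chaining these gives (1) $\Leftrightarrow$ (2) $\Leftrightarrow$ (3), as required.

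I expect no genuine obstacle here: the whole content is the reduction $Z_{12}=0$, after which each implication is a one-line substitution. The only point deserving a word of care is confirming that the Codazzi condition really yields $Z_{12}=0$ when evaluated on the frame $\{X_1,X_2\}$ rather than merely on coordinate vector fields, but since the Codazzi identity is tensorial in its two arguments it holds for arbitrary vector fields, so the frame evaluation is legitimate. This argument runs exactly parallel to the proofs of Propositions \ref{Prop-t=const}, \ref{Prop-divT=0} and \ref{Prop-funct-olomorfa}, all of which reduce to the same divergence formula of Lemma \ref{lema1}.
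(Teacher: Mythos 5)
Your proof is correct and follows exactly the route the paper intends: the paper gives no explicit proof of this proposition, stating only that Lemma \ref{lema1} is the key ingredient, and your reduction of the Codazzi hypothesis to $Z_{12}=0$, hence $\Div T=\grad t$, followed by substitution into the divergence and holomorphicity characterizations of that lemma, is precisely that argument.
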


Summarizing, we can state the next theorem.

\begin{theorem}
\label{teorema1}
Let $\left(M^2,g\right)$ be a surface and consider $T$ be a symmetric tensor field of type $(1,1)$. Then any two of the following relations imply each of the others
\begin{enumerate}
  \item $\Div T=0$;
  \item $t$ is constant;
  \item $\langle T\left(\partial_z\right),\partial_z \rangle$ is holomorphic;
  \item $T$ is a Codazzi tensor field.
\end{enumerate}
\end{theorem}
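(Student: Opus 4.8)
The plan is to deduce the theorem directly from Lemma \ref{lema1}, which reduces all four properties to linear relations between the two vector fields $\grad t$ and $\Div T$. Writing $Z=\langle Z_{12},X_2\rangle X_1-\langle Z_{12},X_1\rangle X_2$ as in the remark following that lemma, item~(1) gives the identity $\Div T=\grad t-Z$, so that $Z=\grad t-\Div T$ is already determined by the other two quantities. First I would translate each condition into an equation in $\grad t$ and $\Div T$: condition~(1) is $\Div T=0$ verbatim; condition~(2) reads $\grad t=0$; by item~(2) of the lemma, condition~(3) (holomorphicity of $\langle T(\partial_z),\partial_z\rangle$) is exactly $\grad t=2\Div T$; and $T$ being a Codazzi tensor field means $Z_{12}=0$, hence $Z=0$, i.e. $\grad t=\Div T$.

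Once this dictionary is in place the statement becomes pure linear algebra: the four properties are the four linear conditions $\Div T=0$, $\grad t=0$, $\grad t=2\Div T$ and $\grad t=\Div T$ on the pair $(\grad t,\Div T)$. I would then check over the $\binom{4}{2}=6$ pairs that any two of them force $\grad t=0$ and $\Div T=0$ simultaneously; for instance, assuming (3) and (4) gives $2\Div T=\grad t=\Div T$, whence $\Div T=0$ and then $\grad t=0$, while assuming (2) and (3) gives $0=\grad t=2\Div T$, again forcing both to vanish. In every case the two hypotheses collapse to $\grad t=0$ and $\Div T=0$, and once both of these hold all four conditions are satisfied trivially. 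This proves that any two of the four relations imply the remaining ones.

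I do not anticipate a genuine obstacle: all the analytic substance is contained in Lemma \ref{lema1}, in particular the identity $\Div T=\grad t-Z$ together with the holomorphicity criterion $\grad t=2\Div T$, and once these are available the theorem is a one-line observation in linear algebra. The only step demanding a little care is the bookkeeping when rewriting the Codazzi and holomorphicity conditions in terms of $\grad t$ and $\Div T$, and checking that the single relation $\Div T=\grad t-Z$ really encodes the coupling among all three of $\grad t$, $\Div T$ and $Z$. Alternatively, and even more economically, the theorem follows at once by combining Propositions \ref{Prop-t=const}, \ref{Prop-divT=0}, \ref{Prop-funct-olomorfa} and \ref{Prop-T-Codazzi}: given any two of the four conditions, one applies the proposition whose hypothesis is the first of them, which asserts the mutual equivalence of the remaining three; since the second assumed condition is among those three, all three of them hold, and therefore so do all four.
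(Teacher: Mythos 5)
Your proposal is correct and follows essentially the paper's own route: the paper obtains Theorem \ref{teorema1} by ``summarizing'' Propositions \ref{Prop-t=const}--\ref{Prop-T-Codazzi}, all of which rest on Lemma \ref{lema1}, exactly the combination you describe as your alternative. Your primary argument---translating the four conditions into the linear relations $\Div T=0$, $\grad t=0$, $\grad t=2\Div T$, $\grad t=\Div T$ via Lemma \ref{lema1} and checking the six pairs---is just a streamlined packaging of the same content, and it is sound (note only that passing from $\grad t=0$ back to ``$t$ constant'' uses the paper's standing assumption that $M$ is connected).
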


Considering $T=S_2$ or $T=A_H$, we get the following result.

\begin{theorem}
\label{teorema2}
Let $\varphi:M^2\to N^n$ be a surface. Then any two of the following relations imply each of the others
\begin{enumerate}
  \item $M$ is biconservative;
  \item $|H|$ is constant;
  \item $\langle A_H\left(\partial_z\right),\partial_z \rangle$ is holomorphic;
  \item $A_H$ is a Codazzi tensor field.
\end{enumerate}
\end{theorem}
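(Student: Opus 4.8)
The plan is to apply Theorem~\ref{teorema1} to the two tensor fields $T = A_H$ and $T = S_2$ and then translate its four abstract conditions into the geometric ones in the statement. First I would record the relevant traces for $m = 2$: since $\trace A_H = m|H|^2$ one has $\trace A_H = 2|H|^2$, and from $\trace S_2 = m^2|H|^2(2 - m/2)$ one has $\trace S_2 = 4|H|^2$; hence for both choices of $T$ the condition ``$\trace T$ is constant'' is exactly condition (2). Next, working in the conformal coordinates $g = e^{2\rho}(dx^2 + dy^2)$ of Lemma~\ref{lema1}, I would note that $\langle \partial_z, \partial_z\rangle = 0$, so from $S_2 = -2|H|^2 I + 4A_H$ (equation~\eqref{S_2} with $m=2$) we get $\langle S_2(\partial_z),\partial_z\rangle = 4\langle A_H(\partial_z),\partial_z\rangle$; thus holomorphicity is the same condition (3) whether read from $A_H$ or from $S_2$. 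Finally, $\Div S_2 = 0$ is by definition biconservativity, condition (1).

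With this dictionary, Theorem~\ref{teorema1} for $S_2$ gives the pairwise implications among $\{(1),(2),(3),\, S_2 \text{ Codazzi}\}$, and for $A_H$ among $\{\Div A_H = 0,\,(2),(3),(4)\}$. These two instances share (2) and (3) and between them already produce (1), (2), (3), (4); the only point to reconcile is that the two free-divergence hypotheses differ. Here I would use equation~\eqref{divergente0}, which for $m = 2$ reads $\Div S_2 = -2\grad(|H|^2) + 4\Div A_H$: when (2) holds the two divergences vanish simultaneously, so under (2) biconservativity is equivalent to $\Div A_H = 0$. This bridge lets the $A_H$-instance of Theorem~\ref{teorema1} absorb condition (1) as soon as $|H|$ is known to be constant.

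The cleanest route to the conclusion is therefore to show that every pair of conditions forces (2); once $|H|$ is constant, (1) $\Leftrightarrow \Div A_H = 0$, and then (2) together with any single one of (1), (3), (4) propagates to all four by Theorem~\ref{teorema1}. Pairs containing (2) are immediate; $\{(1),(3)\}$ forces (2) through the $S_2$-instance and $\{(3),(4)\}$ through the $A_H$-instance. I expect the main obstacle to be the pair $\{(1),(4)\}$, the only one not lying inside a single instance of Theorem~\ref{teorema1}: for this I would argue directly that a Codazzi $A_H$ satisfies $\Div A_H = 2\grad(|H|^2)$ (Lemma~\ref{lema1}(1) with vanishing Codazzi defect, or Proposition~\ref{prop-calc1}(2)), while biconservativity gives $\Div A_H = \tfrac12\grad(|H|^2)$ by~\eqref{divergente0}; comparing the two forces $\grad(|H|^2) = 0$, that is (2). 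This last step is precisely Proposition~\ref{prop-calc1}(4) specialized to surfaces, so one may cite it rather than recompute. In fact the same computation shows that each of (1), (3), (4) is equivalent to a relation $\Div A_H = c\,\grad(|H|^2)$ with $c = \tfrac12, 1, 2$ respectively, which dissolves all six cases uniformly: any two distinct such relations force $\grad(|H|^2) = 0$ and $\Div A_H = 0$, whence all four conditions hold.
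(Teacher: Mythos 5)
Your proposal is correct, and its first half follows the paper's own proof almost exactly: the paper likewise applies Theorem \ref{teorema1} to both $T=A_H$ and $T=S_2$, uses $\langle S_2(\partial_z),\partial_z\rangle=4\langle A_H(\partial_z),\partial_z\rangle$ to identify the two holomorphicity conditions, bridges the two free-divergence conditions through \eqref{divergente0}, and disposes of the pair $\{(1),(4)\}$ by precisely the argument you give (cited there as Proposition \ref{prop-calc1}). Where you genuinely depart from the paper is the endgame. The paper fixes one condition and verifies the remaining equivalences pair by pair, working out one representative case and leaving ``the other cases'' to the reader; your final paragraph instead translates each of (1), (3), (4) into a single relation $\Div A_H=c\,\grad\left(|H|^2\right)$ with $c=\tfrac12,\,1,\,2$ respectively --- via \eqref{divergente0} for (1), Lemma \ref{lema1}(2) for (3), and Lemma \ref{lema1}(1) with vanishing $Z$ for (4), the last equivalence using that on a surface $T$ is Codazzi exactly when $Z_{12}=0$ --- while (2) reads $\grad\left(|H|^2\right)=0$ by connectedness. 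Any two of the four statements then force $\grad\left(|H|^2\right)=0$ and $\Div A_H=0$ simultaneously, after which all four hold trivially. This uniform dictionary is a real gain in economy: it removes the case analysis entirely, it is complete where the paper's proof is only sketched, and it shows the theorem rests on Lemma \ref{lema1} alone rather than on the full strength of Theorem \ref{teorema1}. What the paper's organization buys in exchange is keeping Theorem \ref{teorema1} visible as the general principle for an arbitrary symmetric tensor field $T$, a principle it reuses later (e.g., for $T=S_2$ in the Simons-type formula section).
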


\begin{proof}
For the sake of completeness, we will point out a few details from the proof.

First, we recall that $S_2=-2|H|^2I+4A_H$, $\trace S_2=4|H|^2$, $\trace A_H=2|H|^2$ and
\begin{equation}\label{divergente}
\Div S_2=-2\grad \left(|H|^2\right)+4\Div A_H.
\end{equation}
It is easy to see that
$$
\langle S_2\left(\partial_z\right),\partial_z \rangle=4\langle A_H\left(\partial_z\right),\partial_z \rangle,
$$
and, therefore, $\langle A_H\left(\partial_z\right),\partial_z \rangle$ is holomorphic if and only if $\langle S_2\left(\partial_z\right),\partial_z \rangle$ is holomorphic.

The idea of the proof is to choose a condition and then prove the equivalence between each two other conditions using, in principal, Theorem $\ref{teorema1}$ applied for $T=A_H$ or $T=S_2$.

For example, we assume that $(1)$ holds. To prove that $(2)$ implies $(4)$ we note that since $\Div S_2=0$ and $|H|$ is constant, form $\eqref{divergente}$, we get $\Div A_H=0$. Now, from Theorem $\ref{teorema1}$ applied to $T=A_H$, we get $(3)$. Conversely, from Proposition $\ref{prop-calc1}$, we have that hypotheses $(1)$ and $(4)$ imply $(2)$. The other two equivalences, $(2)$ and $(3)$, and $(3)$ and $(4)$, respectively, follow easily from the equivalence between $\langle A_H\left(\partial_z\right),\partial_z \rangle$ being holomorphic and $\langle S_2\left(\partial_z\right),\partial_z \rangle$ being holomorphic, and the same Theorem $\ref{teorema1}$ with $T=A_H$ and $T=S_2$.

The other cases can be easily proved in a similar way.

\end{proof}

\begin{remark}
If $\varphi:M^2\to N^n$ is a non-pseudoumbilical $CMC$ biconservative surface then the set of pseudoumbilical points has no accumulation points. Also, we quickly deduce that if $M^2$ is a $CMC$ biconservative surface and it is a topological sphere then it is pseudoumbilical (see \cite{MOR16-2}); compare with the classical result: a $PMC$ surface $M^2$ of genius $0$ in a space form is pseudoumbilical (see \cite{H73}).
\end{remark}

Using Corollary $\ref{corolarAH}$, Remark $\ref{const-CMC}$ and Theorem $\ref{teorema2}$, one gets the next result.

\begin{theorem}
\label{main-th0.}
Let $\varphi:M^2\to N^n$ be a biconservative surface. We denote by by $\lambda_1$ and $\lambda_2$ the principal curvatures of $M$. If $\lambda_1$ and $\lambda_2$ are constant functions of $M$, then $\nabla A_H=0$.
\end{theorem}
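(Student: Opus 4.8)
The plan is to chain together the three cited results so that the hypotheses supply the two ingredients required by Corollary \ref{corolarAH}, namely constant principal curvatures (already assumed) together with $A_H$ being a Codazzi tensor field.

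First I would observe that constancy of the principal curvatures immediately forces $M$ to be $CMC$. Indeed, on a surface one has $\trace A_H=\lambda_1+\lambda_2=2|H|^2$, so if $\lambda_1$ and $\lambda_2$ are constant functions then $|H|^2$, and hence $|H|$, is constant; this is exactly the content of Remark \ref{const-CMC}. It is worth isolating this step because it is what converts the purely eigenvalue-theoretic hypothesis into the analytic hypothesis ($|H|$ constant) that the subsequent theorem needs.

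Next, having both that $M$ is biconservative (by hypothesis) and that $|H|$ is constant (from the previous step), I would invoke Theorem \ref{teorema2}: since any two of its four conditions imply the remaining two, conditions $(1)$ and $(2)$ together yield condition $(4)$, that is, $A_H$ is a Codazzi tensor field.

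Finally, with $\lambda_1$ and $\lambda_2$ constant and $A_H$ a Codazzi tensor field, Corollary \ref{corolarAH} applies verbatim and gives $\nabla A_H=0$, which is the assertion. I do not expect any serious obstacle in this argument; the only point deserving a moment's care is the opening step, where one must notice that constant eigenvalues force a constant trace, and therefore a constant $|H|$, thereby furnishing the $CMC$ hypothesis that triggers Theorem \ref{teorema2} and, through it, the Codazzi property feeding into Corollary \ref{corolarAH}.
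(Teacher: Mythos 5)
Your proposal is correct and follows exactly the paper's intended argument: the paper proves this theorem by citing precisely the same three results (Remark \ref{const-CMC} to get $|H|$ constant, Theorem \ref{teorema2} applied to conditions $(1)$ and $(2)$ to get that $A_H$ is a Codazzi tensor field, and Corollary \ref{corolarAH} to conclude $\nabla A_H=0$). You have simply made explicit the chaining that the paper leaves to the reader.
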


\begin{remark}
If we replace the hypothesis of ``$M$ is a biconservative surface'' in Theorem $\ref{main-th0.}$ by ``$\langle A_H\left(\partial_z\right),\partial_z\rangle$ is a holomorphic function'' the conclusion still holds.
\end{remark}

Since, any $CMC$ surface in a $3$-dimensional space form is biconservative, using Theorem $\ref{teorema2}$, we easily get the following well-known properties for a $CMC$ surface.

\begin{corollary}
Let $\varphi:M^2\to N^3(c)$ be a $CMC$ surface, $c\in\mathbb{R}$. Then
\begin{enumerate}
  \item $M$ is biconservative;
  \item $A_H$ is a Codazzi tensor field;
  \item $\langle A_H\left(\partial_z\right),\partial_z\rangle$ is holomorphic.
\end{enumerate}
\end{corollary}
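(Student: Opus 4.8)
The plan is to verify two of the four conditions in Theorem~\ref{teorema2} directly from the hypotheses and then let that theorem supply the remaining two. The $CMC$ assumption means precisely that $|H|$ is constant, which is condition~$(2)$ of Theorem~\ref{teorema2}; so item~$(1)$ of the corollary, biconservativity, is the only statement that needs a genuine argument, after which items~$(2)$ and~$(3)$ follow for free.

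To establish biconservativity I would first observe that, because $\dim N=3$ and $\dim M=2$, the normal bundle $NM$ is a real line bundle. Writing $H=|H|\,\eta$ for a local unit normal $\eta$, I would compute $\nabla^\perp_X H = X(|H|)\eta + |H|\,\nabla^\perp_X\eta$; since $\langle\nabla^\perp_X\eta,\eta\rangle=\tfrac12 X\langle\eta,\eta\rangle=0$ and $NM$ has rank one, the term $\nabla^\perp_X\eta$ must vanish, and as $|H|$ is constant we conclude $\nabla^\perp H=0$, i.e.\ the surface is $PMC$. One may then invoke the earlier proposition asserting that a submanifold of a space form with $\nabla^\perp H=0$ is biconservative (and has $A_H$ a Codazzi tensor field) to obtain condition~$(1)$ at once; alternatively, I would plug these vanishings into the space-form form of Proposition~\ref{corolarbicons}$(3)$, where the ambient-curvature term $\trace(R^N(\cdot,H)\cdot)^T$ is already absent, the gradient term $\grad(|H|^2)$ dies by the $CMC$ hypothesis, and $\trace A_{\nabla^\perp_\cdot H}(\cdot)=0$ because $\nabla^\perp H=0$.

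Having secured conditions~$(1)$ and~$(2)$ of Theorem~\ref{teorema2}, the final step is immediate: that theorem states that any two of the four conditions imply the other two, so in particular $A_H$ is a Codazzi tensor field and $\langle A_H(\partial_z),\partial_z\rangle$ is holomorphic, which are exactly items~$(2)$ and~$(3)$ of the corollary.

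The only point demanding care is the codimension-one reduction $\nabla^\perp H=0$; everything else is a direct appeal to results already established. I do not expect a serious obstacle here, since in a line bundle the normal connection is automatically flat on unit sections and the $CMC$ hypothesis removes the remaining length-derivative.
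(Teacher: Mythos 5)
Your proposal is correct and follows essentially the same route as the paper: the paper's proof consists precisely of the observation that a $CMC$ surface in $N^3(c)$ is biconservative, followed by an application of Theorem~\ref{teorema2} to the pair of conditions ``biconservative'' and ``$|H|$ constant'', exactly as you do. Your only addition is to spell out the well-known fact the paper takes for granted---that in codimension one the rank-one normal bundle forces $\nabla^\perp_X\eta=0$, so $CMC$ gives $\nabla^\perp H=0$ and hence biconservativity via the $PMC$ proposition---which is a correct filling-in of detail rather than a different argument.
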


For a $PMC$ surface in an arbitrary manifold $A_H$ is not necessarily a Codazzi tensor field, but when the surface is biconservative, this does happen.

\begin{corollary}
Let $\varphi:M^2\to N^n$ be a biconservative surface with $\nabla^\perp H=0$. Then $A_H$ is a Codazzi tensor field and $\left(R^N(X,Y)H\right)^T=0$ for any $X,Y\in C(TM)$.
\end{corollary}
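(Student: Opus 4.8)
The plan is to reduce everything to results already established, the key observation being that a surface with $\nabla^\perp H=0$ automatically has constant mean curvature. First I would note that since $\nabla^\perp H=0$, for every $X\in C(TM)$ we have $X\left(|H|^2\right)=2\langle\nabla^\perp_X H,H\rangle=0$, so $|H|$ is $\cst$ on $M$. Thus the two properties at my disposal are exactly ``$M$ is biconservative'' and ``$|H|$ is constant''.

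Next I would invoke Theorem \ref{teorema2}: since conditions $(1)$ and $(2)$ there hold, condition $(4)$ follows, that is, $A_H$ is a Codazzi tensor field. This settles the first assertion of the corollary with no extra work.

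For the second assertion I would feed the Codazzi property into Proposition \ref{prop-calc1}, whose first item gives
$$
A_{\nabla^\perp_X H}(Y)-A_{\nabla^\perp_Y H}(X)=\left(R^N(X,Y)H\right)^T,\qquad X,Y\in C(TM).
$$
Because $\nabla^\perp H=0$, both shape operators on the left are $A_0=0$, so the left-hand side vanishes identically; hence $\left(R^N(X,Y)H\right)^T=0$ for all $X,Y\in C(TM)$, which is the claim.

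The argument is essentially a chaining of the cited results, so I do not anticipate a genuine obstacle; the only point requiring care is the opening reduction $\nabla^\perp H=0\Rightarrow|H|=\cst$, which is what unlocks Theorem \ref{teorema2}. Should one prefer to avoid Proposition \ref{prop-calc1}, an equally short direct route is available: writing the Codazzi equation for the second fundamental form $B$ and pairing with $H$, the term $\langle B(Y,Z),\nabla^\perp_X H\rangle$ drops out by the hypothesis, leaving $\langle(\nabla_X A_H)(Y)-(\nabla_Y A_H)(X),Z\rangle=\langle R^N(X,Y)Z,H\rangle$; the Codazzi property of $A_H$ then kills the left side, and the antisymmetry of $R^N$ in its last two arguments yields $\left(R^N(X,Y)H\right)^T=0$.
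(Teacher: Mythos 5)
Your proposal is correct and follows essentially the same route as the paper: deduce that $|H|$ is constant from $\nabla^\perp H=0$, apply Theorem \ref{teorema2} with conditions $(1)$ and $(2)$ to get that $A_H$ is a Codazzi tensor field, and then combine the Codazzi equation (packaged as item $(1)$ of Proposition \ref{prop-calc1}, or used directly as in your alternative remark) with $\nabla^\perp H=0$ to conclude $\left(R^N(X,Y)H\right)^T=0$. The only cosmetic difference is the order in which the two vanishing substitutions are made in the Codazzi equation, which is immaterial.
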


\begin{proof}
First, we note that $\nabla^\perp H=0$ implies $|H|$ constant, and, if $M$ is biconservative, from Theorem $\ref{teorema2}$, we have that $A_H$ is a Codazzi tensor field.

Now, to show that $\left(R^N(X,Y)H\right)^T=0$ we only have to replace $\nabla^\perp H=0$ in the Codazzi equation and use the fact that $A_H$ is a Codazzi tensor field.
\end{proof}

The next theorem gives a description in terms of $|H|$ and the difference between the principal curvatures of $M$ of the metric and of the shape operator in the direction of $H$ for a $CMC$ biconservative surface in an arbitrary manifold.

\begin{theorem}
\label{Delta-log-mu}
Let $\varphi:M^2\to N^n$ be a $CMC$ biconservative surface. Denote by $\lambda_1$ and $\lambda_2$ the principal curvatures of $M$ and by $\mu=\lambda_1-\lambda_2$ their difference. Then, around any non-pseudoumbilical point $p$ there exists a local chart $(U;x,y)$ which is both isothermal and a line of curvature coordinate system for $A_H$. Moreover, on $U$, we have
$$
\langle \cdot,\cdot\rangle=\frac{1}{\mu}\langle \cdot,\cdot\rangle_0,
$$
and $A_H$ is given by
$$
\langle A_H(\cdot),\cdot\rangle = \left(\frac{|H|^2}{\mu}+\frac{1}{2}\right) dx^2 + \left(\frac{|H|^2}{\mu}-\frac{1}{2}\right) dy^2,
$$
or, equivalently, by
$$
A_H = \left(\frac{|H|^2}{\mu}+\frac{1}{2}\right) dx \otimes \partial_x + \left(\frac{|H|^2}{\mu}-\frac{1}{2}\right) dy \otimes \partial_y,
$$
where $\langle\cdot,\cdot\rangle_0$ is the Euclidean metric on $\mathbb{R}^2$.

In the particular case when $n=3$, we obtain that $\mu$ satisfies
\begin{equation}\label{myDelta}
\mu \myDelta \mu + \left|\mygrad \mu\right|^2_0+2\mu\left(K^N+|H|^2-\frac{\mu^2}{4|H|^2}\right)=0,
\end{equation}
where $K^N$ is the sectional curvature of $N^3$ along $M^2$.
\end{theorem}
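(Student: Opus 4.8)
The plan is to feed Theorem \ref{teorema2} into the classical Hopf-differential machinery. Since $\varphi$ is $CMC$ (so $|H|$ is constant) and biconservative, Theorem \ref{teorema2} gives that $A_H$ is a Codazzi tensor field and that the generalized Hopf function $\langle A_H(\partial_z),\partial_z\rangle$ is holomorphic in every isothermal chart. Because $\trace A_H=2|H|^2$ is constant we have $\lambda_1+\lambda_2=2|H|^2$, hence $\lambda_1=|H|^2+\mu/2$ and $\lambda_2=|H|^2-\mu/2$. At a non-pseudoumbilical point $p$ we have $\mu(p)=\lambda_1(p)-\lambda_2(p)>0$, and a non-pseudoumbilical point forces $|H|\neq 0$ (otherwise $A_H\equiv 0=|H|^2 I$ and every point would be pseudoumbilical); thus $\mu>0$ and $|H|\neq 0$ on a whole neighborhood of $p$.

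First I would build the adapted chart. Pick any isothermal coordinate $w=u+iv$ near $p$, write $\langle\cdot,\cdot\rangle=e^{2\sigma}(du^2+dv^2)$ and set $\Phi=\langle A_H(\partial_w),\partial_w\rangle$. Writing $A_H\partial_u=a\partial_u+b\partial_v$ and $A_H\partial_v=b\partial_u+d\partial_v$ (the $g$-symmetry forcing the off-diagonal entries to coincide) and using $\partial_w=\tfrac12(\partial_u-i\partial_v)$, a direct computation gives $\Phi=\tfrac{e^{2\sigma}}{4}\bigl((a-d)-2ib\bigr)$; in particular $|\Phi|=\tfrac{e^{2\sigma}}{4}\mu$, so $\Phi$ is a \emph{nowhere-vanishing} holomorphic function near $p$. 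Then I define the new coordinate $z=x+iy$ by $z=2\int\sqrt{\Phi}\,dw$ (a fixed branch of the square root); since $z$ is holomorphic, $(x,y)$ is again isothermal, and the transformation law $\Phi_w=(dz/dw)^2\,\Phi_z$ of the quadratic differential forces $\langle A_H(\partial_z),\partial_z\rangle\equiv\tfrac14$. Equating this constant to the general expression $\tfrac{e^{2\sigma}}{4}\bigl((a-d)-2ib\bigr)$ computed now in the $z$-frame, vanishing of the imaginary part gives $b=0$ (so $\partial_x,\partial_y$ are principal directions, i.e. the chart is a line-of-curvature system), while positivity of the real part pins the labeling $a=\lambda_1>d=\lambda_2$.

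With $b=0$ the real part reads $\tfrac{e^{2\sigma}}{4}(\lambda_1-\lambda_2)=\tfrac14$, i.e. $e^{2\sigma}=1/\mu$, which is exactly $\langle\cdot,\cdot\rangle=\tfrac1\mu\langle\cdot,\cdot\rangle_0$. The $(0,2)$-form of the shape operator then follows by evaluating $\langle A_H(\partial_x),\partial_x\rangle=\lambda_1 e^{2\sigma}=|H|^2/\mu+1/2$ and $\langle A_H(\partial_y),\partial_y\rangle=\lambda_2 e^{2\sigma}=|H|^2/\mu-1/2$, giving the stated $dx^2,dy^2$ expression; the operator form is obtained by raising an index with $g^{-1}=\mu\,\langle\cdot,\cdot\rangle_0^{-1}$.

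For the last assertion take $n=3$, so $M^2$ is a hypersurface with unit normal $\eta$, $H=h\eta$, $|H|=|h|$ and $A_H=h A_\eta$; hence the principal curvatures $k_1,k_2$ of $A_\eta$ satisfy $k_1k_2=\lambda_1\lambda_2/h^2=(|H|^4-\mu^2/4)/|H|^2=|H|^2-\mu^2/(4|H|^2)$. The Gauss equation gives $K=K^N+k_1k_2=K^N+|H|^2-\mu^2/(4|H|^2)$ for the intrinsic Gaussian curvature $K$. On the other hand, from $e^{2\sigma}=1/\mu$ (so $\sigma=-\tfrac12\ln\mu$) the conformal-curvature formula $K=-e^{-2\sigma}(\sigma_{xx}+\sigma_{yy})$ yields $2\mu K=\mu(\mu_{xx}+\mu_{yy})-(\mu_x^2+\mu_y^2)$. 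Equating the two expressions for $2\mu K$ produces, with the Euclidean operators $\myDelta=-(\partial_x^2+\partial_y^2)$ and $|\mygrad\mu|^2_0=\mu_x^2+\mu_y^2$, exactly equation \eqref{myDelta}. The one genuinely delicate step is the chart construction: one must check that $\Phi\neq 0$ persists on a neighborhood and that the branch of $\sqrt{\Phi}$ together with the normalizing factor $2$ are chosen so that the Hopf function becomes precisely the constant $\tfrac14$; everything afterwards is curvature bookkeeping, where the only care needed is to match the paper's sign conventions for $R^N$ and for $\myDelta$ so that the Gauss relation enters as $K=K^N+k_1k_2$.
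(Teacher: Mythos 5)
Your proof is correct, but it takes a genuinely different route from the paper's. You exploit item (3) of Theorem \ref{teorema2} (holomorphicity of the generalized Hopf function) and run the classical Hopf-differential normalization: since $|\Phi|=\tfrac{e^{2\sigma}}{4}\mu\neq 0$ near a non-pseudoumbilical point, $\Phi$ admits a local holomorphic square root, and the holomorphic coordinate change $z=2\int\sqrt{\Phi}\,dw$ makes $\langle A_H(\partial_z),\partial_z\rangle\equiv\tfrac14$; the vanishing of the imaginary part then yields the line-of-curvature property and the real part yields $e^{2\sigma}=1/\mu$ in one stroke. The paper instead exploits item (4) of the same theorem ($A_H$ is Codazzi) and works with a moving orthonormal eigenframe $\{E_1,E_2\}$: the Codazzi identity together with the constancy of $\trace A_H$ give the connection form $\omega_1^2=\tfrac12\left(E_2(\log\mu)\,\omega^1-E_1(\log\mu)\,\omega^2\right)$, whence $\left[E_1/\sqrt{\mu},E_2/\sqrt{\mu}\right]=0$, and the chart is obtained by integrating this commuting frame. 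Your approach is shorter and places the theorem squarely in the classical CMC/Hopf framework (nonvanishing of the holomorphic quadratic differential is exactly non-pseudoumbilicity), while the paper's real, frame-based computation produces the connection form explicitly, which is the sort of data that feeds naturally into the moving-frames computations elsewhere in the paper. Both proofs handle the $n=3$ case identically (Gauss equation plus the conformal curvature formula with the sign convention $\myDelta=-(\partial_x^2+\partial_y^2)$), and both implicitly need $|H|\neq 0$, which you, commendably, justify explicitly. One tiny slip at the end: the paper's displayed operator form, with coefficients $\frac{|H|^2}{\mu}\pm\frac12$, arises from the $(0,2)$-form by raising the index with the flat metric $\langle\cdot,\cdot\rangle_0$, not with $g^{-1}=\mu\,\langle\cdot,\cdot\rangle_0^{-1}$ as you write — the latter would give the coefficients $\lambda_1=|H|^2+\mu/2$ and $\lambda_2=|H|^2-\mu/2$; this affects only that bookkeeping remark, not the substance of your argument.
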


\begin{proof}
Let $\lambda_1$ and $\lambda_2$ be the principal curvatures of $M$ and $p$ a non-pseudoumbilical point in $M$. Since $\lambda_1$ and $\lambda_2$ are principal curvatures of $M$, they are continuous, and it follows that there exists an open neighborhood $U$ around $p$ such that $\lambda_1>\lambda_2$ are smooth functions on $U$ and $\mu=\lambda_1-\lambda_2$ is a positive smooth function on $U$.

Consider $\left\{E_1,E_2\right\}$ a local orthonormal frame field on $U$ such that $A_H\left(E_i\right)=\lambda_iE_i$, for any $i\in\left\{1,2\right\}$. Further, we consider the connection forms on $U$, defined by
$$
\nabla E_1=\omega_1^2 E_2 \text{ and } \nabla E_2=\omega_2^1 E_1.
$$
Clearly, $\omega_1^2=-\omega_2^1$. From Theorem $\ref{teorema2}$ one obtains $A_H$ is a Codazzi tensor field, i.e., on $U$ we have
$$
\left(\nabla A_H\right)\left(E_1,E_2\right)=\left(\nabla A_H\right)\left(E_2,E_1\right).
$$
Using the principal curvatures of $M$ and the definition of $\omega_i^j$, on $U$, we get
$$
\omega_1^2\left(E_1\right)=\frac{1}{\mu}E_2\left(\lambda_1\right) \text{ and } \omega_1^2\left(E_2\right)=\frac{1}{\mu}E_1\left(\lambda_2\right).
$$
Now, since $|\trace B|=2|H|$ is a constant, it is easy to see that $E_2\left(\lambda_1\right)=\left(E_2(\mu)\right)/2$ and $E_1\left(\lambda_2\right)=-\left(E_1(\mu)\right)/2$. If we denote by $\left\{\omega^1,\omega^2\right\}$ the local orthornormal coframe field defined on $U$ dual to $\left\{E_1,E_2\right\}$, one gets
$$
\omega^2_1=\frac{1}{2}\left(\left(E_2(\log \mu)\right)\omega^1 - \left(E_1(\log \mu)\right) \omega^2\right).
$$
After some straightforward computations, we obtain
$$
\left[ \frac{E_1}{\sqrt{\mu}}, \frac{E_2}{\sqrt{\mu}}\right]=0,
$$
and, therefore, on $U$ there exist coordinates functions $x$ and $y$ such that $\partial_x = E_1/\sqrt{\mu}$ and $\partial_y = E_2/\sqrt{\mu}$. Moreover the expression of the metric in isothermal coordinates on $U$ is
$$
\langle\cdot,\cdot\rangle =\frac{1}{\mu}\left(dx^2 + dy^2\right)=\frac{1}{\mu}\langle \cdot,\cdot\rangle_0.
$$
Since $\lambda_1$ and $\lambda_2$ are principal curvatures of $M$, it is easy to see that
$$
\langle A_H(\cdot),\cdot\rangle = \frac{1}{\mu}\left(\lambda_1 dx^2 +\lambda_2 dy^2\right).
$$
We conclude, using $\lambda_1 + \lambda_2 = 2|H|^2$ and $\lambda_1-\lambda_2=\mu$, that
\begin{equation}\label{val-proprii}
\lambda_1 = |H|^2+\frac{\mu}{2} \text{ and } \lambda_2 = |H|^2-\frac{\mu}{2}.
\end{equation}

In the $n=3$ case, from the Gauss equation it follows that
\begin{equation}\label{ec-gauss}
K = K^N\left(E_1,E_2\right)+|H|^2-\frac{\mu^2}{4|H|^2},
\end{equation}
where $K^N$ is the sectional curvature of $N$ along $M$.

Now, we recall that if $\langle \cdot,\cdot\rangle=e^{2\rho}\langle \cdot,\cdot\rangle_0$, where $\rho=\rho(x,y)$ is a smooth function on $M$, then $K=e^{-2\rho}\myDelta\rho$. In our case $\rho= -(\log \mu)/2$ and therefore
\begin{align*}
K & = -\frac{\mu}{2} \myDelta(\log \mu)\\
  & = -\frac{1}{2\mu}\left|\mygrad \mu\right|_0^2-\frac{1}{2}\myDelta\mu,
\end{align*}
where $\langle\cdot,\cdot\rangle_0$ is the Euclidean metric on $\mathbb{R}^2$, $\myDelta$ and $\mygrad$ are the Laplacian and the gradient, respectively, with respect to $\langle\cdot,\cdot\rangle_0$.

Therefore, replacing $K$ in $\eqref{ec-gauss}$, we obtain that $\mu$ is a solution of $\eqref{myDelta}$.
\end{proof}

\begin{remark}
Biconservative surfaces in Bianchi–-Cartan-–Vranceanu spaces, which are 3-dimensional spaces with non-constant sectional curvature, were studied in \cite{MOP}.
\end{remark}

\begin{remark}
If $N$ is a $3$-dimensional space form, the same result holds without imposing the hypothesis of biconservativity, as a $CMC$ surface is automatically biconservative.
\end{remark}

We note that, since $K=-(\Delta(\log \mu))/2$, the next result is obvious.

\begin{corollary}
\label{cololar-tor}
Let $\varphi:M^2\to N^n$ be a $CMC$ biconservative surface. Assume that $M$ is compact and does not have pseudoumbilical points. Then $M$ is a topologic torus.
\end{corollary}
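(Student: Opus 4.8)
The plan is to reduce the statement to the Gauss--Bonnet theorem, exploiting the fact that on a compact surface without pseudoumbilical points the difference of the principal curvatures is a globally defined positive function. First I would observe that, since $M$ has no pseudoumbilical points, we have $\lambda_1>\lambda_2$ everywhere (with the ordering convention $\lambda_1\geq\lambda_2$); hence the eigenvalue functions of $A_H$ are smooth on all of $M$ and $\mu=\lambda_1-\lambda_2$ is a strictly positive smooth function defined globally. Consequently $\log\mu$ is a well-defined smooth function on the whole of $M$, not merely on the local charts produced in Theorem \ref{Delta-log-mu}.

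Next I would upgrade the local curvature identity of Theorem \ref{Delta-log-mu} to a global one. The expression $K=-\frac{1}{2}\Delta(\log\mu)$ quoted in the remark preceding the statement is intrinsic: here $K$ is the Gaussian curvature and $\Delta=-\trace\nabla^2$ is the (geometer's) Laplace--Beltrami operator of $(M,g)$, both coordinate-free. Since the local formula holds around every point of $M$ (there being no pseudoumbilical points to exclude) and both sides are globally defined smooth functions, the identity holds on all of $M$.

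The conclusion then follows by integration. As $M$ is compact and without boundary, the divergence theorem gives $\int_M\Delta(\log\mu)\,v_g=0$, whence
$$
\int_M K\,v_g=-\frac{1}{2}\int_M\Delta(\log\mu)\,v_g=0.
$$
By the Gauss--Bonnet theorem, $\int_M K\,v_g=2\pi\chi(M)$, so $\chi(M)=0$. Since $M$ is assumed compact and oriented (see the \emph{Conventions}), a surface with vanishing Euler characteristic is a torus, which is the desired conclusion.

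The computation is entirely routine once the global object $\log\mu$ is in hand; accordingly the only point deserving care---the main obstacle, such as it is---is the first step, namely verifying that the absence of pseudoumbilical points genuinely makes $\mu$ a smooth positive function on all of $M$ (so that $\log\mu$ and the curvature identity make sense everywhere), rather than only locally near individual non-pseudoumbilical points as in Theorem \ref{Delta-log-mu}. This is precisely what the hypothesis guarantees, so the result is indeed immediate, as the remark asserts.
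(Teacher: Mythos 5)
Your proof is correct and takes essentially the same approach as the paper: the paper deduces the corollary directly from the identity $K=-\frac{1}{2}\Delta(\log\mu)$, declaring the result ``obvious,'' and your argument is precisely the globalization of that identity (using that no pseudoumbilical points means $\mu>0$ is smooth on all of $M$) followed by integration, the divergence theorem, and Gauss--Bonnet to get $\chi(M)=0$. You have simply written out in full the steps the paper leaves implicit.
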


Using Theorem $\ref{teorema2}$ and Corollary $\ref{cololar-tor}$ we obtain the following property.

\begin{corollary}
Let $\varphi:M^2\to N^n$ be a $CMC$ biconservative surface. Assume that $M$ is compact and does not have pseudoumbilical points. If $K\geq 0$ or $K\leq 0$, then $\nabla A_H=0$ and $K=0$.
\end{corollary}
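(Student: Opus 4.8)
The plan is to reduce everything to the pointwise identity $K=-\tfrac{1}{2}\Delta(\log\mu)$ recorded just before the statement (and obtained in Theorem \ref{Delta-log-mu}), and then to exploit the compactness of $M$. Since $M$ has no pseudoumbilical points we have $\lambda_1>\lambda_2$ at every point, so the difference $\mu=\lambda_1-\lambda_2$ is a strictly positive smooth function on all of $M$; consequently $\log\mu$ is a globally defined smooth function and the identity $K=-\tfrac{1}{2}\Delta(\log\mu)$ holds at every point of $M$, where $\Delta$ is the Laplace--Beltrami operator of $(M,g)$ in the sign convention of the paper.

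First I would integrate this identity over the compact surface $M$. Because $\int_M \Delta f\, v_g=0$ for every smooth function $f$ on a compact manifold without boundary, we obtain at once $\int_M K\, v_g=0$. Equivalently, one may invoke Corollary \ref{cololar-tor}, by which $M$ is a topological torus, so that Gauss--Bonnet gives the same conclusion. Combined with the hypothesis that $K$ is sign-definite---either $K\geq 0$ everywhere or $K\leq 0$ everywhere---the vanishing of the integral forces $K\equiv 0$ on $M$.

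Next I would feed $K\equiv 0$ back into the identity to get $\Delta(\log\mu)=0$, i.e.\ $\log\mu$ is harmonic on the compact surface $M$. Integrating $\langle\grad(\log\mu),\grad(\log\mu)\rangle$ by parts (equivalently, applying the maximum principle) shows that $\log\mu$, and hence $\mu$, is constant on $M$. Since the surface is $CMC$, $|H|$ is constant, so from the expressions $\lambda_1=|H|^2+\mu/2$ and $\lambda_2=|H|^2-\mu/2$ in \eqref{val-proprii} both principal curvatures $\lambda_1,\lambda_2$ are constant functions on $M$. As $M$ is biconservative with constant principal curvatures, Theorem \ref{main-th0.} yields $\nabla A_H=0$, which completes the argument.

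The only point requiring care is the passage from the local computation in Theorem \ref{Delta-log-mu} to a genuinely global statement: this is legitimate precisely because the absence of pseudoumbilical points makes $\mu$ positive and smooth on the whole of $M$, so no gluing of charts is needed and the single pointwise identity may be integrated directly over $M$. Beyond this there is no real analytic obstacle; the substance of the proof lies entirely in the elementary but decisive observation that a sign-definite function with vanishing integral over a compact manifold must vanish identically.
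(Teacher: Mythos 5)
Your proposal is correct and follows essentially the same route as the paper: the paper derives this corollary from the identity $K=-\tfrac{1}{2}\Delta(\log\mu)$ (via Corollary \ref{cololar-tor} and Gauss--Bonnet, giving $\int_M K\, v_g=0$ and hence $K\equiv 0$ under the sign hypothesis) together with Theorem \ref{teorema2}, and your argument simply makes explicit the intermediate steps the paper leaves implicit (harmonicity of $\log\mu$ forcing $\mu$ constant, hence constant principal curvatures by \eqref{val-proprii}, hence $\nabla A_H=0$ by Theorem \ref{main-th0.}). Your care about the global smoothness and positivity of $\mu$ in the absence of pseudoumbilical points is exactly the right justification for globalizing the local identity.
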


We end this section with two results which basically say that a $CMC$ biconservative surface in $N^n$ can be also immersed in $N^3(c)$ having as the shape operator either the tensor field $A_H$ or $S_2$.

\begin{theorem}
Let $\varphi:M^2\to N^n$ be a biconservative surface. We denote by $\lambda_1$ and $\lambda_2$ the principal curvatures of $M$ corresponding to $\varphi$. Assume that $\lambda_1$ and $\lambda_2$ are constants and $\lambda_1>\lambda_2$. We have:
\begin{itemize}
  \item [a)] locally, there exists $\psi:M^2\to N^3(c)$ an isoparametric surface such that $A^\varphi_{H^\varphi}$ is the shape operator of  $\psi$ in the direction of the unit normal vector field, where
      $$
      c = \frac{\mu^2}{4}-\left|H^\varphi\right|^4;
      $$
      moreover $\left|H^\psi\right|=\left|H^\varphi\right|^2$.
  \item [b)] locally, there exists $\psi:M^2\to N^3(c)$ an isoparametric surface such that $S^\varphi_2$ is the shape operator of $\psi$ in the direction of the unit normal vector field, where
      $$
      c=4\left(\mu^2-\left|H^\varphi\right|^4\right);
      $$
      moreover $\left|H^\psi\right|=2\left|H^\varphi\right|^2$.
\end{itemize}
\end{theorem}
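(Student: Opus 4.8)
The plan is to realize the tensor field $A^\varphi_{H^\varphi}$ (resp. $S^\varphi_2$) as the shape operator of an isometric immersion of $(M^2,g)$ into a $3$-dimensional space form, by appealing to the fundamental (Bonnet-type) existence theorem for surfaces in space forms. That theorem asserts that if $(M^2,g)$ is a simply connected surface and $A$ is a symmetric $(1,1)$-tensor field satisfying the Gauss equation $K=c+\det A$ together with the Codazzi equation (i.e.\ $A$ is a Codazzi tensor field), then, locally, there exists an isometric immersion $\psi:M^2\to N^3(c)$ whose shape operator in the direction of a unit normal is exactly $A$. The strategy is thus to verify these two compatibility equations for $A=A^\varphi_{H^\varphi}$ and then to read off $c$ and $|H^\psi|$.

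First I would assemble the two structural facts that make the compatibility equations transparent. Since $M$ is biconservative and $\lambda_1,\lambda_2$ are constant, Theorem~\ref{main-th0.} gives $\nabla A^\varphi_{H^\varphi}=0$; in particular $A^\varphi_{H^\varphi}$ is a Codazzi tensor field, so the Codazzi equation holds for any target $N^3(c)$. Next, as $\lambda_1>\lambda_2$ the surface is not pseudoumbilical, so Proposition~\ref{flat-pseudo} forces $M$ to be flat, $K=0$. I also note that $\lambda_1,\lambda_2$ constant means $|H^\varphi|$ is constant by Remark~\ref{const-CMC}, whence $S^\varphi_2=-2|H^\varphi|^2I+4A^\varphi_{H^\varphi}$ is parallel as well, hence Codazzi --- this is what part b) will require.

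With $K=0$, the Gauss equation reduces to $c=-\det A^\varphi_{H^\varphi}=-\lambda_1\lambda_2$. Using \eqref{val-proprii}, namely $\lambda_1=|H^\varphi|^2+\mu/2$ and $\lambda_2=|H^\varphi|^2-\mu/2$, I would compute $\lambda_1\lambda_2=|H^\varphi|^4-\mu^2/4$, which yields $c=\mu^2/4-|H^\varphi|^4$ as claimed. The mean curvature of $\psi$ is then $|H^\psi|=\tfrac12\trace A^\varphi_{H^\varphi}=\tfrac12(\lambda_1+\lambda_2)=|H^\varphi|^2$. Finally, since the principal curvatures $\lambda_1,\lambda_2$ of $\psi$ are constant, $\psi$ is isoparametric, completing a).

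Part b) follows the identical template with $A=S^\varphi_2$. Its eigenvalues are $\tilde\lambda_i=-2|H^\varphi|^2+4\lambda_i$, i.e.\ $\tilde\lambda_1=2|H^\varphi|^2+2\mu$ and $\tilde\lambda_2=2|H^\varphi|^2-2\mu$, both constant; $S^\varphi_2$ is parallel hence Codazzi, and $K=0$ again gives $c=-\det S^\varphi_2=-\tilde\lambda_1\tilde\lambda_2=4(\mu^2-|H^\varphi|^4)$, while $|H^\psi|=\tfrac12\trace S^\varphi_2=\tfrac12(\tilde\lambda_1+\tilde\lambda_2)=2|H^\varphi|^2$. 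The only delicate point in the whole argument is the correct invocation of the fundamental theorem of surface theory in space forms --- in particular checking that the Gauss and Codazzi equations are exactly the compatibility conditions under the sign conventions in force; once the parallelism and flatness established above are in hand, the remaining steps are purely algebraic.
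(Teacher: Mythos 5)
Your proposal is correct and follows essentially the same route as the paper: establish that $A^\varphi_{H^\varphi}$ (resp. $S_2^\varphi$) is a Codazzi tensor field, use Proposition \ref{flat-pseudo} to get $K=0$, then invoke the fundamental existence theorem for surfaces in $N^3(c)$ with $c=-\det A^\psi$ read off from the Gauss equation, and compute $c$ and $\left|H^\psi\right|$ from \eqref{val-proprii}. Your version is, if anything, slightly more explicit than the paper's in citing Theorem \ref{main-th0.} to obtain $\nabla A^\varphi_{H^\varphi}=0$ (and hence the Codazzi property and the hypothesis of Proposition \ref{flat-pseudo}), a step the paper leaves implicit.
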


\begin{proof}
First, we define a symmetric tensor field $A^\psi$ of type $(1,1)$ on $M$ by
$$
A^\psi(X)=A^\varphi_{H^\varphi}(X), \qquad X\in C(TM).
$$
As $A^\varphi_{H^\varphi}$ is a Codazzi tensor field, $A^\psi$ satisfies, formally, the Codazzi equation for a surface in a $3$-dimensional space form.

Since the principal curvatures of $M$ corresponding to $\varphi$, $\lambda_1$ and $\lambda_2$, are constants and $\lambda_1>\lambda_2$, from Proposition $\ref{flat-pseudo}$ it follows that $K=0$. Now, formally, from the Gauss equation for a surface in a $3$-dimensional space form $N^3(c)$, and from $\eqref{val-proprii}$, we obtain
\begin{align*}
  c = & -\det A^\psi \\
  = & -\det A^\varphi_{H^\varphi}=\frac{\mu^2}{4}-\left|H^\varphi\right|^4.
\end{align*}
Therefore, locally, there exists an immersion $\psi:M^2\to N^3(c)$ such that its shape operator in the direction of the unit normal vector field is $A^\psi$. Moreover, the surface is isoparametric as $\lambda_1$ and $\lambda_2$ are constants.

It is known that $|\tau(\psi)|=2\left|H^\psi\right|=\trace A^\psi$ and in the same time
$$
\trace A^\varphi_{H^\varphi}=\lambda_1+\lambda_2=2\left|H^\varphi\right|^2=\frac{|\tau(\varphi)|^2}{2}.
$$
From the definition of $A^\psi$ we easily get $\left|H^\psi\right|=\left|H^\varphi\right|^2$.

Second, we define the shape operator associated to the surface $\psi:M^2\to N^3(c)$ as
$$
A^\psi(X)=S_2^\varphi(X), \qquad X\in C(TM),
$$
where $c\in\mathbb{R}$.

Since $S_2^\varphi=-2\left|H^\varphi\right|^2I+4A^\varphi_{H^\varphi}$, using the same argument as in the previous case, one obtains
\begin{align*}
  c = & -\det A^\psi \\
    = & -\det S^\varphi_2=4\left(\mu^2-\left|H^\varphi\right|^4\right)
\end{align*}
and  $|\tau(\psi)|=|\tau(\varphi)|^2$, i.e.,
$$
\left|H^\psi\right|=2\left|H^\varphi\right|^2.
$$
\end{proof}

\begin{theorem}
Let $\varphi:M^2\to N^n$ be a biconservative surface. Denote by $\lambda_1$ and $\lambda_2$ the principal curvatures of $M$ corresponding to $\varphi$. Assume that $\lambda_1$ and $\lambda_2$ are constants and $\lambda_1=\lambda_2$. If $K=0$, then we have:
\begin{itemize}
  \item [a)] Locally, there exists $\psi:M^2\to N^3(c)$ an umbilical surface such that $A^\varphi_{H^\varphi}$ is the shape operator of $\psi$ in the direction of the unit normal vector field, where
      $$
      c=-\left|H^\varphi\right|^4;
      $$
      moreover $\left|H^\psi\right|=\left|H^\varphi\right|^2$.
  \item [b)] locally, there exists $\psi:M^2\to N^3(c)$ an umbilical surface such that $S^\varphi_2$ is the shape operator of $\psi$ in the direction of the unit normal vector field, where
      $$
      c=-4\left|H^\varphi\right|^4;
      $$
      moreover $\left|H^\psi\right|=2\left|H^\varphi\right|^2$.
\end{itemize}
\end{theorem}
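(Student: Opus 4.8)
The plan is to follow the proof of the preceding theorem almost verbatim; the two differences are that here $A^\varphi_{H^\varphi}$ turns out to be a multiple of the identity, so the resulting surface $\psi$ is umbilical rather than isoparametric with distinct principal curvatures, and that the flatness of $M$ is now assumed instead of being extracted from Proposition \ref{flat-pseudo}.

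First I would note that, since $\lambda_1=\lambda_2$ are the constant eigenvalue functions of $A^\varphi_{H^\varphi}$, the surface is pseudoumbilical, i.e.
$$
A^\varphi_{H^\varphi}=\left|H^\varphi\right|^2 I,
$$
with $\left|H^\varphi\right|^2=\lambda_1$ constant. Hence $\nabla A^\varphi_{H^\varphi}=0$, so $A^\varphi_{H^\varphi}$ is trivially a Codazzi tensor field, and from \eqref{S_2} the tensor $S_2^\varphi=-2\left|H^\varphi\right|^2 I+4A^\varphi_{H^\varphi}=2\left|H^\varphi\right|^2 I$ is parallel as well.

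For part a) I would define the symmetric $(1,1)$-tensor field $A^\psi(X)=A^\varphi_{H^\varphi}(X)$. Being parallel, $A^\psi$ satisfies, formally, the Codazzi equation for a surface in a $3$-dimensional space form. Using $K=0$ and the Gauss equation $K=c+\det A^\psi$ for such a surface, I would solve for the curvature of the ambient space,
$$
c=-\det A^\psi=-\det A^\varphi_{H^\varphi}=-\left|H^\varphi\right|^4.
$$
By the fundamental theorem of submanifolds in space forms, there then exists, locally, an isometric immersion $\psi:M^2\to N^3(c)$ whose shape operator in the direction of the unit normal vector field is $A^\psi$; since $A^\psi$ is a multiple of $I$, $\psi$ is umbilical. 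Finally, from $2\left|H^\psi\right|=\trace A^\psi=\trace A^\varphi_{H^\varphi}=\lambda_1+\lambda_2=2\left|H^\varphi\right|^2$ I would read off $\left|H^\psi\right|=\left|H^\varphi\right|^2$.

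Part b) is the same argument applied to $A^\psi(X)=S_2^\varphi(X)=2\left|H^\varphi\right|^2 I$, which yields $c=-\det S_2^\varphi=-4\left|H^\varphi\right|^4$ and, from $2\left|H^\psi\right|=\trace S_2^\varphi=4\left|H^\varphi\right|^2$, the relation $\left|H^\psi\right|=2\left|H^\varphi\right|^2$. The only substantive point---where the hypotheses genuinely enter---is the invocation of the fundamental (Bonnet) theorem: one must verify that the abstract pair $(g,A^\psi)$ satisfies both the Gauss and the Codazzi equations for a surface in $N^3(c)$. The Codazzi equation holds because $A^\psi$ is parallel, while the Gauss equation is exactly what determines $c=-\det A^\psi$, consistency being guaranteed by the assumption $K=0$. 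I expect no computational obstacle beyond making this formal identification precise, exactly as in the preceding theorem.
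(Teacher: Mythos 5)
Your proposal is correct and follows exactly the argument the paper intends: the paper states this theorem without proof precisely because it is the preceding theorem's proof adapted to the umbilical case, which is what you do. Your two observations---that $A^\varphi_{H^\varphi}=\left|H^\varphi\right|^2 I$ (hence parallel, hence formally Codazzi) and that flatness must now be \emph{assumed} since Proposition \ref{flat-pseudo} yields no information when the surface is pseudoumbilical---are the only genuine modifications needed, and you handle both correctly.
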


\section{The Simons type formula for $S_2$}

As we have already mentioned we will present here some converse results of Proposition $\ref{propValProprii}$ (see Theorem $\ref{main-th0.}$, Theorem $\ref{main-th.}$ for the compact case, and Theorem $\ref{main-th1.}$ for the complete non-compact case).

First, as in the previous section, we will compute the rough-Laplacian $\Delta^R T$ for an arbitrary symmetric tensor field $T$ of type $(1,1)$ on $M$ with $\Div T=0$, and then $\Delta^R S_2$.

\begin{proposition}
\label{prop-calc2}
Let $\left(M^2,g\right)$ be a surface and  $T$ a symmetric tensor field of type $(1,1)$. Assume that $\Div T=0$. Then
\begin{equation}
\label{ec-nabla^2}
\trace \left(\nabla^2 T \right) = 2 K T - t K I -(\Delta t) I- \nabla \grad t.
\end{equation}
\end{proposition}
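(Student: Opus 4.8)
The plan is to compute $\trace(\nabla^2 T)$ in a well-chosen local frame and use the hypothesis $\Div T=0$ together with the two-dimensionality of $M$ to control the curvature terms. Since $M^2$ is oriented, around any point I would pick a local orthonormal frame field $\{X_1,X_2\}$, and it is cleanest to arrange it to be geodesic at the point of interest, so that $\nabla_{X_i}X_j=0$ there and the covariant second derivatives reduce to iterated directional derivatives. By definition $\trace(\nabla^2 T)=\sum_i (\nabla^2 T)(X_i,X_i)=\sum_i \nabla_{X_i}\nabla_{X_i}T$, and the strategy is to rewrite each $\nabla_{X_i}\nabla_{X_i}T$ so that a curvature commutator appears and the remaining terms assemble into $\nabla\grad t$ and $(\Delta t)I$.

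The key algebraic input is the Ricci commutation identity for the symmetric $(1,1)$-tensor $T$, namely the relation between $\nabla_{X_i}\nabla_{X_j}T$ and $\nabla_{X_j}\nabla_{X_i}T$ governed by the curvature operator acting on $T$; in dimension two the full Riemann tensor is determined by the single Gaussian curvature $K$, so every curvature expression collapses to a multiple of $K$. Concretely, I expect the commutator terms to produce exactly the combination $2KT-tKI$ after using $R(X_1,X_2)=K(\,\cdot\,)$-type identities and the symmetry of $T$; the factor $t=\trace T$ enters because contracting $KT$ against the identity in two dimensions returns $tK$. The remaining, non-curvature terms are the ones that, after invoking $\Div T=0$, rearrange into $-(\Delta t)I-\nabla\grad t$. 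Here I would use Proposition~\ref{ec-prod-scalar-delta}'s companion framework, or more directly the first item of Lemma~\ref{lema1}: since $\Div T=\grad t - Z$ and $\Div T=0$ gives $Z=\grad t$, the derivatives of $T$ in the off-diagonal directions are pinned down by derivatives of the trace $t$, which is precisely what converts the leftover second-derivative terms into $\Delta t$ and $\Hess t$ contributions.

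Concretely, the plan is as follows. First I would expand $\trace(\nabla^2T)$ at a point in a geodesic frame and split it into a ``commuted'' piece plus a curvature commutator. Second, I would evaluate the commutator using that in two dimensions $R(X,Y)Z=K(\langle Y,Z\rangle X-\langle X,Z\rangle Z)$-type formula, together with the induced action on the $(1,1)$-tensor $T$, to extract $2KT-tKI$. Third, in the commuted piece I would commute one covariant derivative past the trace using $\Div T=0$ (equivalently $Z=\grad t$ from Lemma~\ref{lema1}), which turns the surviving terms into $\grad t$-derivatives; recognizing $\trace \nabla \grad t=\sum_i\nabla_{X_i}\grad t$ against the identity as $\Delta t$ (recall $\Delta t=-\trace\nabla^2 t$ under the paper's sign convention, so signs must be tracked carefully), I would assemble $-(\Delta t)I-\nabla\grad t$.

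The main obstacle I anticipate is bookkeeping of signs and of the order of differentiation, rather than any conceptual difficulty. The paper uses the convention $\Delta^R T=-\trace\nabla^2 T$ and $R^N(U,V)T=\nabla_U\nabla_V T-\nabla_V\nabla_U T-\nabla_{[U,V]}T$, so the curvature commutator and the Laplacian each carry a sign that must be reconciled with the stated right-hand side; a single sign slip would misplace the factor in front of $KT$ or flip $(\Delta t)I$. The other delicate point is ensuring that the identity $\Div T=0\Rightarrow$ (off-diagonal derivative constraint) is applied in a frame-independent way, which the global formula $\Div T=\grad t - Z$ from the remark following Lemma~\ref{lema1} guarantees. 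Once these sign conventions are fixed and the two-dimensional curvature reduction is substituted, the stated formula \eqref{ec-nabla^2} should fall out directly.
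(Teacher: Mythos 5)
Your proposal is correct and follows essentially the same route as the paper's proof: work in a geodesic orthonormal frame at a point, split $\trace(\nabla^2 T)$ so that a Ricci commutation produces the curvature part (which in dimension two collapses to $2KT - tKI$), and use $\Div T = 0$ together with the divergence--trace relation of Lemma \ref{lema1} to turn the remaining second-derivative terms into $-(\Delta t)I - \nabla\grad t$. The only blemish is the typo in your schematic curvature formula (the last term should be $\langle X,Z\rangle Y$, not $\langle X,Z\rangle Z$), which does not affect the argument.
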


\begin{proof}
Let $p\in M$ be an arbitrary point and $\left\{X_1,X_2\right\}$ a local orthornormal frame field, geodesic around $p$. Clearly, in $p$ we have
$$
\left(\trace\left(\nabla^2 T\right)\right)\left(X_j\right)=\sum_{i=1}^2\left(\nabla^2 T\right)\left(X_i,X_i,X_j\right).
$$
We note that we can rewrite the right hand term as
\begin{align*}
\sum_{i=1}^2\left(\nabla^2 T\right)\left(X_i,X_i,X_j\right)= &\sum_{i=1}^2\left(\left(\nabla^2 T\right)\left(X_i,X_i,X_j\right)-\left(\nabla^2 T\right)\left(X_i,X_j,X_i\right)\right)\\
   & + \sum_{i=1}^2 \left(\nabla^2 T\right)\left(X_i,X_j,X_i\right).
\end{align*}
After some straightforward computations, at $p$ one obtains
\begin{align*}
\sum_{i=1}^2\left(\nabla^2 T\right)\left(X_i,X_i,X_j\right)= &\sum_{i=1}^2\left(X_i\langle\Div T,X_j\rangle X_i- X_i\langle\Div T,X_i\rangle X_j-\left(X_i\left(X_jt\right)\right)X_i\right. +\\
 & \left.+ \left(X_i\left(X_it\right)\right)X_j + \left(\nabla^2 T\right)\left(X_i,X_j,X_i\right)\right),
\end{align*}
where $t=\trace T$, as in the previous section.

Further, applying Ricci's formula, since $\Div T=0$ and
$$
\sum_{i=1}^{2} \left( \nabla^2 T\right)\left(X_j,X_i,X_i\right)=0
$$
at $p$, it follows that, at $p$, we have
\begin{align*}
  \left(\trace \left(\nabla^2 T\right)\right)\left(X_j\right) =
  \left(2KT-\left(\Delta t\right) I-\nabla \grad t-KtI\right)\left(X_j\right).
\end{align*}
Therefore, at $p$ one obtains
$$
\trace \left(\nabla^2 T \right) = 2 K T - t K I -(\Delta t) I- \nabla \grad t.
$$
Since $p$ was arbitrary chosen, we get that the expression of $\trace \left(\nabla^2 T \right)$ holds on $M$.
\end{proof}

Using relation \eqref{ec-nabla^2} we can compute the Laplacian of the squared norm of $S_2$ and obtain a Simons type formula (here, instead of the second fundamental form we have the stress-bienergy tensor).

\begin{proposition}

Let $\varphi:M^2\to N^n$ be a biconservative surface. Then,
\begin{equation}\label{Simons}
\begin{array}{rl}
  \frac{1}{2}\Delta \left|S_2\right|^2 = & -2K\left|S_2\right|^2 + \Div \left( \left(\langle S_2,\grad \left(|\tau(\varphi)|^2\right)\rangle\right)^\sharp\right)+K|\tau(\varphi)|^4 \\
    & +\frac{1}{2}\Delta\left(|\tau(\varphi)|^4\right)+\left|\grad\left(|\tau(\varphi)|^2\right)\right|^2-\left|\nabla S_2\right|^2
\end{array}.
\end{equation}

\end{proposition}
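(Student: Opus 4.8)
The plan is to combine a Bochner-type identity for $|S_2|^2$ with the expression for $\trace\nabla^2 S_2$ furnished by Proposition \ref{prop-calc2}. Throughout I set $t=\trace S_2$; recalling $S_2=-2|H|^2 I+4A_H$ and $\trace S_2=4|H|^2$ (the case $m=2$), we have $t=|\tau(\varphi)|^2$, and since $M$ is biconservative, $\Div S_2=0$. Taking $S=T=S_2$ in \eqref{ec-prod-scalar-delta}, the vector field there is $Z=\langle\nabla_{X_i}S_2,S_2\rangle X_i=\tfrac12\grad|S_2|^2$, so $\Div Z=-\tfrac12\Delta|S_2|^2$, and I obtain the identity
$$
\tfrac12\Delta|S_2|^2=\langle\Delta^R S_2,S_2\rangle-|\nabla S_2|^2 .
$$
This already produces the term $-|\nabla S_2|^2$ and reduces the problem to evaluating $\langle\Delta^R S_2,S_2\rangle$.

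Next I would apply Proposition \ref{prop-calc2} to $T=S_2$, which is legitimate precisely because $\Div S_2=0$: from \eqref{ec-nabla^2},
$$
\Delta^R S_2=-\trace\nabla^2 S_2=-2KS_2+tKI+(\Delta t)I+\nabla\grad t .
$$
Pairing with $S_2$ and using $\langle I,S_2\rangle=\trace S_2=t$ together with $\langle S_2,S_2\rangle=|S_2|^2$ gives
$$
\langle\Delta^R S_2,S_2\rangle=-2K|S_2|^2+t^2K+t\,\Delta t+\langle\nabla\grad t,S_2\rangle .
$$

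It then remains to rewrite the last three terms. Since $t=|\tau(\varphi)|^2$, one has $t^2K=K|\tau(\varphi)|^4$. For the middle term I would invoke the elementary function identity $t\,\Delta t=\tfrac12\Delta(t^2)+|\grad t|^2$, which yields $\tfrac12\Delta(|\tau(\varphi)|^4)+|\grad(|\tau(\varphi)|^2)|^2$. The only step requiring a little care is the term $\langle\nabla\grad t,S_2\rangle=\langle S_2,\Hess t\rangle$: applying \eqref{diver} with $\alpha=t$ and using $\Div S_2=0$ once more, it equals $\Div\bigl(S_2(\grad t)\bigr)=\Div\bigl((\langle S_2,\grad(|\tau(\varphi)|^2)\rangle)^\sharp\bigr)$. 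Substituting these back into the expression for $\langle\Delta^R S_2,S_2\rangle$ and combining with the Bochner identity gives exactly \eqref{Simons}. The argument is essentially bookkeeping once Proposition \ref{prop-calc2} is available; the main (and only mildly delicate) point is recognizing that the Hessian pairing is a pure divergence, which is where biconservativity enters a second time through \eqref{diver}.
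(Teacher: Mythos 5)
Your proof is correct and follows essentially the same route as the paper's: both combine the Bochner-type identity obtained from \eqref{ec-prod-scalar-delta} with $T=S=S_2$, the expression \eqref{ec-nabla^2} of Proposition \ref{prop-calc2} applied to $S_2$ (valid since $\Div S_2=0$), and the divergence formula \eqref{diver} to convert $\langle S_2,\Hess(|\tau(\varphi)|^2)\rangle$ into the divergence term, finishing with the same identity $t\,\Delta t=\tfrac12\Delta(t^2)+|\grad t|^2$. The sign conventions and the two uses of biconservativity are handled exactly as in the paper, so there is nothing to add.
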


\begin{proof}
First, using the fact that $\trace S_2=|\tau(\varphi)|^2$ and applying $(\ref{ec-nabla^2})$ for $T=S_2$ one obtains
\begin{equation}
\label{ec-DeltaS2}
\Delta^R S_2 = -2KS_2+\nabla\grad\left(|\tau(\varphi)|^2\right)+ \left(K|\tau(\varphi)|^2+\Delta \left(|\tau(\varphi)|^2\right) \right)I,
\end{equation}
where $\Delta^R S_2 = - \trace \left(\nabla^2 S_2 \right)$.

Then, from $\eqref{diver}$, since $M$ is biconservative, one gets
\begin{equation}\label{D1}
\Div \left( S_2\left(\grad \left(|\tau(\varphi)|^2\right)\right)\right)=
\langle S_2,\Hess \left(|\tau(\varphi)|^2 \right) \rangle.
\end{equation}
It is easy to see that from $(\ref{ec-prod-scalar-delta})$, considering $T=S=S_2$, one has
\begin{equation}\label{Weitz}
\frac{1}{2}\Delta \left|S_2\right|^2=\left\langle \Delta^R S_2,S_2\right\rangle-\left|\nabla S_2\right|^2.
\end{equation}
Further, since $\langle I,S_2\rangle = \trace S_2= |\tau(\varphi)|^2$ and
$$
\Delta\left(\left|\tau(\varphi)\right|^2\right)|\tau(\varphi)|^2= \frac{1}{2}\Delta\left(\left|\tau(\varphi)\right|^4\right)+\left|\grad\left(|\tau(\varphi)|^2\right)\right|^2,
$$
from $\eqref{ec-DeltaS2}$, $\eqref{D1}$ and $\eqref{Weitz}$, it follows that relation \eqref{Simons} holds.
\end{proof}

\begin{remark}
If $\varphi:M^2\to N^n$ is a $CMC$ biconservative surface, then $S_2$ is a Codazzi tensor field and relation \eqref{ec-DeltaS2} follows from a well-known formula in \cite{CY}.
\end{remark}

\begin{remark}
Formula $\eqref{ec-DeltaS2}$ was obtained in \cite{LO14} but for biharmonic maps (a stronger hypothesis) from surfaces and in a different way.
\end{remark}

Integrating \eqref{Simons} we get the following integral formula.

\begin{proposition}
Let $\varphi:M^2\to N^n$ be a biconservative surface and assume that $M$ is compact. Then
\begin{equation}
\label{int-S2}
\int_M \left( \left|\nabla S_2\right|^2 + 2K \left(\left|S_2\right|^2 - \frac{|\tau(\varphi)|^4}{2}\right) \right) v_g = \int_M \left|\grad \left(|\tau(\varphi)|^2\right)\right|^2 \ v_g.
\end{equation}
or, equivalently,
\begin{align*}
\int_M \left( \left|\nabla A_H\right|^2 + 2K\left(\left| A_H\right|^2-2|H|^4\right)\right)v_g=\frac{5}{2}\int_{M} \left|\grad \left(|H|^2\right)\right|^2 v_g.
\end{align*}
\end{proposition}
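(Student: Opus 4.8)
The plan is to integrate the Simons type formula \eqref{Simons} over the compact surface $M$ and to exploit the fact that several of its terms are exact, i.e., either divergences or Laplacians. Since $M$ is compact and without boundary, the divergence theorem gives $\int_M \Div W \, v_g = 0$ for every vector field $W$, and hence $\int_M \Delta f \, v_g = 0$ for every smooth function $f$ (a Laplacian being, up to sign, the divergence of a gradient). Applying this to \eqref{Simons}, the left-hand side $\frac12\int_M \Delta|S_2|^2 \, v_g$ vanishes, while on the right-hand side both $\int_M \Div\left(\left(\langle S_2,\grad(|\tau(\varphi)|^2)\rangle\right)^\sharp\right) v_g$ and $\frac12\int_M \Delta(|\tau(\varphi)|^4)\, v_g$ vanish as well.

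What survives is the relation
$$
0 = \int_M \left( -2K|S_2|^2 + K|\tau(\varphi)|^4 + |\grad(|\tau(\varphi)|^2)|^2 - |\nabla S_2|^2 \right) v_g,
$$
and rearranging, together with the identity $2K|S_2|^2 - K|\tau(\varphi)|^4 = 2K\left(|S_2|^2 - \frac12|\tau(\varphi)|^4\right)$, yields exactly \eqref{int-S2}. This first part of the argument is essentially immediate once the Simons formula is available, and presents no real difficulty.

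To obtain the equivalent $A_H$-formulation, I would substitute the algebraic identities already recorded in the excerpt: for $m=2$ one has $S_2 = -2|H|^2 I + 4A_H$, $\trace S_2 = |\tau(\varphi)|^2 = 4|H|^2$, and $|S_2|^2 = 16|A_H|^2 - 24|H|^4$. Hence $|\tau(\varphi)|^4 = 16|H|^4$, $|\grad(|\tau(\varphi)|^2)|^2 = 16|\grad(|H|^2)|^2$, and $|S_2|^2 - \frac12|\tau(\varphi)|^4 = 16\left(|A_H|^2 - 2|H|^4\right)$. The one genuine computation is $|\nabla S_2|^2$: differentiating $S_2 = -2|H|^2 I + 4A_H$ and using that $I$ is parallel gives $\nabla_X S_2 = -2\,X(|H|^2)\,I + 4\,\nabla_X A_H$, so that
$$
|\nabla_X S_2|^2 = 4(X(|H|^2))^2|I|^2 - 16\,X(|H|^2)\,\trace(\nabla_X A_H) + 16|\nabla_X A_H|^2.
$$
Here $|I|^2 = 2$ and $\trace(\nabla_X A_H) = X(\trace A_H) = 2\,X(|H|^2)$, which after summing over an orthonormal frame produces $|\nabla S_2|^2 = 16|\nabla A_H|^2 - 24|\grad(|H|^2)|^2$.

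Inserting all of these into \eqref{int-S2}, the term $-24|\grad(|H|^2)|^2$ in $|\nabla S_2|^2$ combines with the $16|\grad(|H|^2)|^2$ coming from $|\grad(|\tau(\varphi)|^2)|^2$, and dividing by $16$ gives
$$
\int_M \left( |\nabla A_H|^2 + 2K(|A_H|^2 - 2|H|^4) \right) v_g = \frac{40}{16}\int_M |\grad(|H|^2)|^2 \, v_g = \frac{5}{2}\int_M |\grad(|H|^2)|^2 \, v_g,
$$
as claimed. The only obstacle is the bookkeeping of numerical coefficients—in particular the cross term in $|\nabla S_2|^2$, which is exactly what produces the factor $\frac52$; a single sign or factor slip there would spoil the final constant.
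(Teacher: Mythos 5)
Your proposal is correct and follows essentially the same route as the paper: integrate the Simons type formula \eqref{Simons} over the compact surface (the divergence and Laplacian terms vanish by the divergence theorem), then pass to the $A_H$-formulation via $|S_2|^2=16|A_H|^2-24|H|^4$ and $|\nabla S_2|^2=16|\nabla A_H|^2-24\left|\grad\left(|H|^2\right)\right|^2$. Your explicit computation of the cross term $-16\,X\left(|H|^2\right)\trace\left(\nabla_X A_H\right)$, which the paper leaves as ``standard computations,'' is accurate and correctly produces the factor $\frac{5}{2}$.
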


\begin{proof}
Since $M$ is compact, relation \eqref{int-S2} quickly follows integrating \eqref{Simons}. To obtain the second equation, i.e., an equivalent expression to $\eqref{int-S2}$, in terms of $A_H$ and $|H|$, we recall that
\begin{equation}\label{norm-s2}
\left|S_2\right|^2 = 16\left|A_H\right|^2 - 24|H|^4
\end{equation}
and
$$
\nabla_X S_2 = -2\left(X\left(|H|^2\right)\right) I + 4\nabla_X A_H,
$$
for any $X\in C(TM)$.

Then, by standard computations, we obtain
$$
\left|\nabla S_2\right|^2 = 16 \left|\nabla A_H\right|^2 -24 \left|\grad\left(|H|^2\right)\right|^2.
$$
Finally, we can rewrite $(\ref{int-S2})$ as
\begin{align*}
  \int_M \left( 16 \left|\nabla A_H\right|^2 -24 \left|\grad\left(|H|^2\right)\right|^2 + \right.& \left(2 K\left( 16\left|A_H\right|^2 - 24|H|^4 -8|H|^4 \right)\right) v_g  =\\
  = 16\int_M \left|\grad \left(|H|^2\right)\right|^2 v_g
\end{align*}
and by a direct computation we get
\begin{equation}
\label{int-AH}
\int_M \left( \left|\nabla A_H\right|^2 + 2K\left(\left| A_H\right|^2-2|H|^4\right)\right)v_g=\frac{5}{2}\int_{M} \left|\grad \left(|H|^2\right)\right|^2 v_g.
\end{equation}
\end{proof}

\begin{remark}\label{remark-poz}
It is easy to see that $2\left|S_2\right|^2-|\tau(\varphi)|^4=32\left(\left|A_H\right|^2-2|H|^4\right)$ is always non-negative, and it vanishes if and only if $S_2=\left(|\tau(\varphi)|^2\right/2)I$, or equivalently $M$ is pseudoumbilical.
\end{remark}

From $(\ref{int-AH})$ we easily get the following result.

\begin{theorem}
\label{main-th.}
Let $\varphi:M^2\to N^n$ be a $CMC$ biconservative surface and assume that $M$ is compact. If $K\geq 0$, then $\nabla A_H=0$ and $M$ is flat or pseudoumbilical.
\end{theorem}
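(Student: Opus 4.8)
The plan is to extract the conclusion directly from the integral identity \eqref{int-AH}, using the $CMC$ hypothesis to kill the right-hand side and the sign hypothesis $K\geq 0$ together with Remark \ref{remark-poz} to force both remaining terms in the integrand to vanish. First I would observe that since $M$ is $CMC$, the function $|H|^2$ is constant, so $\grad\left(|H|^2\right)=0$ and the right-hand side of \eqref{int-AH} is zero. The identity then reads
$$
\int_M \left(\left|\nabla A_H\right|^2 + 2K\left(\left|A_H\right|^2 - 2|H|^4\right)\right) v_g = 0.
$$

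Next I would argue that both summands in this integrand are pointwise non-negative. The term $\left|\nabla A_H\right|^2$ is a squared norm, hence $\geq 0$. For the second term, Remark \ref{remark-poz} tells us that $\left|A_H\right|^2 - 2|H|^4 \geq 0$ everywhere, vanishing exactly at pseudoumbilical points; combined with the hypothesis $K\geq 0$, the product $2K\left(\left|A_H\right|^2 - 2|H|^4\right)$ is $\geq 0$ as well. An integral of a non-negative continuous function over $M$ that equals zero forces the integrand to vanish identically, so both $\left|\nabla A_H\right|^2 = 0$ and $K\left(\left|A_H\right|^2 - 2|H|^4\right)=0$ hold at every point of $M$.

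From $\left|\nabla A_H\right|^2 = 0$ we immediately conclude $\nabla A_H = 0$, which is the first assertion. For the dichotomy, I would invoke Proposition \ref{flat-pseudo}: a surface with $\nabla A_H = 0$ is either pseudoumbilical or flat. This completes the proof; indeed, in the flat case $K\equiv 0$, and in the pseudoumbilical case $\left|A_H\right|^2 - 2|H|^4$ vanishes by Remark \ref{remark-poz}, so the second pointwise condition is consistent either way.

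I do not anticipate a genuine obstacle here, since the compactness of $M$ makes the integration in \eqref{int-AH} legitimate and the whole argument reduces to a positivity-of-integrand observation. The only point requiring a little care is the sign analysis: one must confirm that $K\geq 0$ and the non-negativity from Remark \ref{remark-poz} act in the same direction so that no cancellation can occur. Once that is checked, the vanishing of each term is forced, and the appeal to Proposition \ref{flat-pseudo} supplies the geometric conclusion.
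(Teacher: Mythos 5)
Your proof is correct and follows essentially the same route as the paper: the $CMC$ hypothesis kills the right-hand side of \eqref{int-AH}, and the pointwise non-negativity of both terms in the integrand (from $K\geq 0$ and Remark \ref{remark-poz}) forces $\left|\nabla A_H\right|^2$ and $K\left(\left|A_H\right|^2-2|H|^4\right)$ to vanish identically, hence $\nabla A_H=0$. The only cosmetic difference is that you obtain the flat-or-pseudoumbilical dichotomy by citing Proposition \ref{flat-pseudo}, whereas the paper reads it off directly from the pointwise identity $K\left(\left|A_H\right|^2-2|H|^4\right)=0$; both conclusions are immediate, and your appeal to Proposition \ref{flat-pseudo} even handles the globalization of the dichotomy cleanly.
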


\begin{proof}
Since $|H|$ is constant, from $(\ref{int-AH})$ one obtains
$$
\left|\nabla A_H\right|^2 + 2K\left(\left| A_H\right|^2-2|H|^4\right)=0.
$$
Therefore $\nabla A_H=0$ and $K\left(\left| A_H\right|^2-2|H|^4\right)=0$. From the last equality, it follows that $K=0$, i.e., $M$ is flat, or $\left| A_H\right|^2-2|H|^4=0$, i.e., $M$ is pseudoumbilical.
\end{proof}

In the following, we will study the complete non-compact biconservative surfaces.

\begin{theorem}
\label{main-th1.}
Let $\varphi:M^2\to N^n$ be a $CMC$ biconservative surface. Assume that $M$ is complete, non-compact, and $K\geq0$. If $\myRiem\ \leq k_0$, where $k_0$ is a non-negative constant, then $\nabla A_H=0$.
\end{theorem}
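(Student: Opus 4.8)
The plan is to show that the non-negative smooth function $|S_2|^2$ is a bounded subharmonic function on $M$, and then to invoke the parabolicity of $M$ to conclude that it is constant; via the Simons formula this constancy forces $\nabla S_2=0$, hence $\nabla A_H=0$. First I would specialize the Simons formula \eqref{Simons} to the $CMC$ case. Since $|H|$ is constant, $|\tau(\varphi)|^2=4|H|^2$ is constant, so $\grad\left(|\tau(\varphi)|^2\right)=0$ and the divergence term, the term $\frac12\Delta\left(|\tau(\varphi)|^4\right)$ and the term $\left|\grad\left(|\tau(\varphi)|^2\right)\right|^2$ all vanish. Using $|S_2|^2=8|H|^4+8\mu^2$ (which follows from \eqref{norm-s2} and \eqref{val-proprii}) together with $|\tau(\varphi)|^4=16|H|^4$, the remaining curvature terms $-2K|S_2|^2+K|\tau(\varphi)|^4$ collapse to $-16K\mu^2$, so that $\frac12\Delta|S_2|^2=-16K\mu^2-|\nabla S_2|^2$. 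As $K\geq 0$, the right-hand side is non-positive; with the convention $\Delta=-\trace\nabla^2$ this means precisely that $|S_2|^2$ is subharmonic.

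The key step, and the place where the hypothesis $\myRiem\leq k_0$ is used, is the a priori upper bound on $|S_2|^2$. Because $M$ is a surface, tracing the Gauss equation yields the pointwise identity $|B|^2=4|H|^2-2K+2K^N$, where $K^N$ denotes the sectional curvature of $N$ along $M$; since $K\geq 0$ and $K^N\leq k_0$, this gives $|B|^2\leq 4|H|^2+2k_0$. In an orthonormal frame diagonalizing $A_H$ one has $\lambda_i=\langle B(E_i,E_i),H\rangle$, whence $|A_H|^2=\lambda_1^2+\lambda_2^2\leq |H|^2|B|^2$; combining this with $\mu^2=2\left(|A_H|^2-2|H|^4\right)$, which follows from \eqref{val-proprii}, I obtain $\mu^2\leq 4|H|^2\left(|H|^2+k_0\right)$. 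As $|H|$ and $k_0$ are constant, $\mu^2$, and therefore $|S_2|^2=8|H|^4+8\mu^2$, is bounded above by a constant on $M$.

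Finally, a complete non-compact surface with $K\geq 0$ has at most quadratic volume growth (Bishop--Gromov applied to $\ricci=K\geq 0$) and is therefore parabolic, so every subharmonic function on $M$ that is bounded from above is constant. Hence $|S_2|^2$ is constant, $\Delta|S_2|^2=0$, and the simplified Simons formula gives $16K\mu^2+|\nabla S_2|^2=0$, so $\nabla S_2=0$. Since $S_2=-2|H|^2I+4A_H$ with $|H|$ constant, this is equivalent to $\nabla A_H=0$.

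The main obstacle is the boundedness step: it is essential here that $M$ is two-dimensional, for only then does the traced Gauss equation control the full norm $|B|^2$ (and hence $A_H$ and $\mu$) purely in terms of $|H|$, $K$ and the ambient curvature bound $k_0$. A secondary point requiring care is the correct invocation of parabolicity; this could instead be replaced by a direct logarithmic cut-off argument that exploits the quadratic volume growth, multiplying $\frac12\Delta|S_2|^2$ by $\psi^2$, integrating by parts, and using $|\grad|S_2|^2|\leq 2|S_2|\,|\nabla S_2|$ to absorb the gradient term, after which the boundedness of $|S_2|^2$ makes the cut-off error tend to zero.
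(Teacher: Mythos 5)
Your proposal is correct and follows essentially the same route as the paper: specialize the Simons formula to the $CMC$ case to see that $|S_2|^2$ is subharmonic when $K\geq 0$, bound $|S_2|^2$ from above via the Gauss equation together with $K\geq 0$ and $\myRiem\ \leq k_0$, invoke parabolicity of complete surfaces with $K\geq 0$ to conclude $|S_2|^2$ is constant, and then read off $\nabla S_2=0$, hence $\nabla A_H=0$. Your boundedness step (tracing the Gauss equation to get $|B|^2=4|H|^2-2K+2K^N$ and then Cauchy--Schwarz) is an equivalent repackaging of the paper's argument, which instead splits the normal bundle along $\eta=H/|H|$ and discards the non-positive determinants of the traceless operators $A_{\eta_\alpha}$.
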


\begin{proof}

As $|H|$ is constant, $\tau(\varphi)=2H$ and $\left|S_2\right|^2=16\left|A_H\right|^2 - 24|H|^4$, from $\eqref{Simons}$ we get
\begin{equation}\label{Laplac-S2}
-\frac{1}{2}\Delta \left|S_2\right|^2=32K\left(\left|A_H\right|^2 - 2|H|^4\right)+\left|\nabla S_2\right|^2.
\end{equation}
Since $K$ and $\left|A_H\right|^2-2|H|^4$ are always non-negative (see the hypothesis and Remark $\ref{remark-poz}$, respectively), we get that $\Delta \left|S_2\right|^2\leq0$, i.e., $\left|S_2\right|^2$ is a subharmonic function.

Next, we prove that $\left|S_2\right|^2$ is bounded from above. From $\eqref{norm-s2}$ it is easy to see that $\left|S_2\right|^2$ is bounded from above if and only if  $\left|A_H\right|^2$ is bounded from above.

Let us consider $\left\{X_1,X_2\right\}$ a local orthonormal frame field on $M$ and $\left\{\eta,\eta_1,\cdots,\eta_{n-3}\right\}$ a local orthonormal coframe field on $M$ such that $H=|H|\eta$. From the Gauss equation we have
\begin{align*}
  K-\myRiem\left(X_1,X_2\right) = &  \langle B\left(X_1,X_1\right),B\left(X_2,X_2\right)\rangle - \left|B\left(X_1,X_2\right)\right|^2  \\
  = & \langle A_\eta\left(X_1\right),X_1\rangle \langle A_\eta\left(X_2\right),X_2\rangle-\left(\langle A_\eta\left(X_1\right),X_2\rangle\right)^2\\
   & +\sum_{\alpha=1}^{n-3}\left(\langle A_{\eta_\alpha}\left(X_1\right),X_1\rangle \langle A_{\eta_\alpha}\left(X_2\right),X_2\rangle-\left(\langle A_{\eta_\alpha}\left(X_1\right),X_2\rangle\right)^2\right)\\
  = & \det A_\eta+ \sum_{\alpha=1}^{n-3}\det A_{\eta_\alpha},
\end{align*}
where
$$
\myRiem\left(X_1,X_2\right) =R^N\left(X_1,X_2,X_1,X_2\right).
$$
It is clear that $\langle H,\eta_\alpha\rangle=0$, and then $\trace A_{\eta_\alpha}=2\langle H,\eta_\alpha\rangle=0$, for any $\alpha\in \left\{1,2,\cdots n-3\right\}$. As $A_{\eta_\alpha}$ is symmetric, we note that $\det A_{\eta_\alpha}\neq 0$ for any $\alpha$ and $\sum_{\alpha=1}^{n-3}\det A_{\eta_\alpha}\leq 0$. Then, we get
\begin{equation}\label{ineq-1}
K-\myRiem\left(X_1,X_2\right)\leq \det A_\eta.
\end{equation}
Let us consider $\mu_1$ and $\mu_2$ the principal curvatures of $A_\eta$. Then
\begin{align*}
  \det A_\eta =& \mu_1\mu_2=\frac{\left(\mu_1+\mu_2\right)^2-\left(\mu_1^2+\mu_2^2\right)}{2} \\
  = & \frac{\left(\trace A_\eta\right)^2-\left|A_\eta\right|^2}{2} \\
  = & \frac{4|H|^2-\left|A_\eta\right|^2}{2}.
\end{align*}
From $\eqref{ineq-1}$ one obtains
$$
\left|A_\eta\right|^2\leq 4|H|^2-2K+2\myRiem\left(X_1,X_2\right).
$$
Since $K\geq 0$ and $\myRiem\ \leq k_0$, it follows that
$$
\left|A_\eta\right|^2\leq 4|H|^2+2k_0.
$$
Therefore, $\left|A_\eta\right|^2$ is bounded from above by the constant $4|H|^2+2k_0$ and then $\left|A_H\right|^2$ is bounded from above.

It is well known that a complete surface with $K\geq 0$ is parabolic (see \cite{H57}), i.e., any subharmonic function bounded from above is constant. Thus, as $\left|S_2\right|^2$ is bounded from above and subharmonic, it follows that $\left|S_2\right|^2$ is a constant. Using
$\left|\nabla S_2\right|^2=16\left|\nabla A_H\right|^2$ and $\eqref{Laplac-S2}$ one obtains that $\nabla A_H=0$, and therefore $M$ is flat or pseudoumbilical.
\end{proof}

\subsection{Exemples of submanifolds with $\nabla A_H=0$}

As we have seen, a $PMC$ surface in a space form $N^n(c)$, $n\geq 4$, is trivially biconservative. But, if the surface is only $CMC$ then it is not necessarily biconservative. In \cite{MOR16-2} it was proved that if a surface is biconservative and $CMC$ in $N^4(c)$, with $c\neq0$, then the surface has to be $PMC$, i.e., the trivial case for our problem. We just recall here that, if $c=1$, then a $PMC$ surface in $\mathbb{S}^4$ is either a minimal surface of a small hypersphere of radius $a$, $a\in (0,1)$, in $\mathbb{S}^4$, or a $CMC$ surface in a small or great hypersphere in $\mathbb{S}^4$ (see \cite{Y74,Y75}). Of course, if we consider a $CMC$ biconservative surface $M^2$ of genus $0$ in $\mathbb{R}^4$, it is pseudoumbilical and therefore it is $PMC$, i.e., $M^2$ is a $2$-sphere (see \cite{H73}). In $\mathbb{R}^4$, there were obtained all $CMC$ biconservative surfaces which are not $PMC$. They are given by the isometric immersion $\varphi:\mathbb{R}^2\to\mathbb{R}^4$ defined by
$$
\varphi(u,v)=\overline{\gamma}(u)+(v+a)\overline{e}_4,
$$
where $\overline{\gamma}:\mathbb{R}\to\mathbb{R}^3$ is a smooth curve parametrized by arc length with positive constant curvature $k$ and free torsion $\tau$. By direct computation we obtain that the second fundamental form of the surface is given by
$$
B\left(\partial_u,\partial_u\right)=k(u)N(u), \qquad B\left(\partial_u,\partial_v\right)=0, \qquad B\left(\partial_v,\partial_v\right)=0,
$$
where $\{T(u),N(u),B(u)\}$ is the Frenet frame field associated to the curve $\overline{\gamma}$. Then, one obtains the expression of the mean curvature vector field
$$
H(u,v)=\frac{k}{2}N(u),
$$
the shape operator with respect to $H$
$$
A_H\left(\partial_u\right)=\frac{k^2}{2} \partial_u, \qquad A_H\left(\partial_v\right)=0
$$
and
$$
\nabla^\perp_{\partial_u}H=\frac{k}{2}\tau(u)N(u), \qquad \nabla^\perp_{\partial_v}H=0.
$$
It is easy to see that $\varphi$ is a biconservative immersion, i.e., satisfies
$$
\grad\left(|H|^2\right)+2\trace A_{\nabla_\cdot^\perp H}(\cdot)+2\trace \left(R^{\mathbb{R}^4}(\cdot,H)\cdot\right)^T=0.
$$
Therefore, $\varphi$ satisfies all hypotheses of Theorem $\ref{main-th1.}$ which implies that $A_H$ is parallel, a fact which can be also checked by a direct computation.

\end{document}